 \definecolor{Refkey}{RGB}{255,127,0}
 \definecolor{Labelkey}{RGB}{127,0,255}
  \def\SK@refcolor{\color{Refkey}}
  \def\SK@labelcolor{\color{Labelkey}}
  \definecolor{mdg}{RGB}{0,177,0} 
  \definecolor{mdb}{RGB}{0,0,191}
  \definecolor{mddb}{RGB}{0,0,91}
  \definecolor{mdy}{RGB}{255,69,0} 
  \definecolor{gray}{RGB}{99,99,99}
\DeclareMathOperator{\Aut}{Aut}
\newtheorem{proposition}{Proposition}
\newtheorem{theorem}[proposition]{Theorem}
\newtheorem{lemma}{Lemma}
\newtheorem{corollary}[proposition]{Corollary}
\theoremstyle{definition}
\newtheorem{definition}{Definition}
\newtheorem{example}{Example}
\theoremstyle{remark}
\title{Self-similarity in cubic blocks of $\mathcal R$-operators}
\author{Igor G. Korepanov}
\date{September 2022 --- September 2023}
\begin{document}

 \sloppy

\maketitle

\medskip

\begin{abstract}
Cubic blocks are studied assembled from linear operators~$\mathcal R$ acting in the tensor product of $d$ linear ``spin'' spaces. Such operator is associated with a linear transformation~$A$ in a vector space over a field~$F$ of a finite characteristic~$p$, like ``permutation-type'' operators studied by Hietarinta. One small difference is that we do not require $A$ and, consequently, $\mathcal R$ to be invertible; more importantly, no relations on~$\mathcal R$ are required of the type of Yang--Baxter or its higher analogues.

It is shown that, in $d=3$ dimensions, a $p^n\times p^n\times p^n$ block decomposes into the tensor product of operators similar to the initial~$\mathcal R$. One generalization of this involves commutative algebras over~$F$ and allows to obtain, in particular, results about spin configurations determined by a \emph{four}-dimen\-sional~$\mathcal R$. Another generalization deals with introducing Boltzmann weights for spin configurations; it turns out that there exists a non-trivial self-similarity involving Boltzmann weights as well.
\end{abstract}


\section{Introduction}\label{s:i}

\subsection[$\mathcal R$-operators]{$\bm{\mathcal R}$-operators}\label{ss:n}

In this paper, a \emph{$d$-dimen\-sional $\mathcal R$-operator} is, by definition, a linear operator acting in the tensor product $\bigotimes _{i=1}^d \mathcal V_i$ of $d$ finite-dimen\-sional linear spaces~$\mathcal V_i$ over the field~$\mathbb C$ of complex numbers, or field~$\mathbb R$ of real numbers.

Study of such operators is typically associated with exactly solvable models in mathematical physics, or with constructing knot invariants in topology, in which cases the $\mathcal R$-operator is supposed to satisfy an ``$n$-simplex equation'', such as Yang--Baxter~\cite{Jimbo,Wu} (where $n=2$) or Zamolodchikov tetrahedron~\cite{Zamolodchikov} (where $n=3$).

In two-dimen\-sional statistical physics, one of the typical cases is where such $\mathcal R$-operators are placed in the vertices of a square lattice, while two orthogonal lines intersecting at each~$\mathcal R$ represent the two spaces $\mathcal V_1$ and~$\mathcal V_2$ in whose tensor product it acts nontrivially. Yang--Baxter equation may then make it possible to calculate some thermodynamical quantities of the model. Similar r\^ole is played in three dimensions by the cubic lattice and Zamolodchikov tetrahedron equation.

Below we attempt, however, to study algebraic structures related to such lattices \emph{directly}, using $\mathcal R$-operators that are \emph{not} supposed to obey any of the mentioned equations. It is natural to begin with a simple kind of $\mathcal R$-operators; first, we recall \emph{permutation-type} operators studied by Hietarinta~\cite{Hietarinta}. These operators mapped any basis vector of the mentioned space $\bigotimes _{i=1}^d \mathcal V_i$ again into its basis vector, in a bijective way, performing thus a \emph{permutation} of the basis. Likewise, we introduce \emph{basis-induced} operators in Definition~\ref{d:R} in almost the same way, but not requiring the mapping of the basis into itself to be bijective.

Matrix entries of basis-induced operators~$\mathcal R$---they may be called ``local Boltzmann weights'' in the context of statistical physics---are clearly either 1 or~0. We will see, however, that they allow us already to calculate some statistical quantities, such as the number of permitted spin configurations on a finite lattice determined by~$\mathcal R$ and some kinds of boundary conditions.

In Section~\ref{s:w}, we introduce \emph{non-trivial}---real positive---Boltzmann weights, in a certain ``non-local'' manner, retaining the term ``basis-induced'' for operators with matrix entries 0 and~1.

\subsection{Motivation}\label{ss:m}

Our main interest lies in studying statistical properties of cubic lattices in higher dimensions $d>2$. In contrast with the case $d=2$, where Yang--Baxter equation has been applied very successfully, only a limited success has been achieved while applying its higher analogues for $d\ge 3$. Apparently, these higher analogues are too restrictive.

These restrictions look especially salient for basis-induced operators. For instance, for $d=3$, such operators are classified in~\cite[Section~5]{Hietarinta}. The reader can see that \emph{all} corresponding matrices~$A$ are \emph{block triangular}, and this means that their three-dimen\-sion\-ality cannot be called full-fledged: there is either one dimension that does not depend on the two others, or two dimensions that do not depend on the remaining one. This is why we are trying to find a different approach.

We will be dealing with \emph{cubic blocks}---cubic fragments of a lattice. One obvious motivation for studying algebraic structure of blocks of $\mathcal R$-operators comes from Kadanoff--Wilson theory~\cite{Kadanoff,Wilson} of critical phenomena in statistical physics, namely from the ``block spin'' idea of Kadanoff~\cite{Kadanoff}. In that theory, a few cubic fragments are united into a greater cube, and then a transformation of spin variables is done, aiming at separating ``less important'' variables from ``more important''. Then, this step is repeated, with an even greater cube as the result, and so on.

It turns out that something like that can be done in our algebraic construction as well: cubic blocks---to which $\mathcal R$-operators correspond---are assembled into a greater cubic block, then, after a transformation of spin variables, the result splits into the tensor product of $\mathcal R$-operators of the same or almost the same kind as the initial one! One difference with the Kadanoff--Wilson theory is that all resulting $\mathcal R$-operators look equally important, at least at the current stage of the development of the theory.

\subsection{What is done in this paper}\label{ss:w}

As we have already said, we consider here one of the simplest kinds of $\mathcal R$-operators---``basis-induced''. In contrast with Hietarinta's work~\cite{Hietarinta}, where he gave a classification of such operators satisfying some $n$-simplex equations, we do not, however, require them to satisfy any specific equations.

Below,
\begin{itemize}\itemsep 0pt
 \item in Section~\ref{s:g}, generalities are recalled about basis-induced $\mathcal R$-operators obtained from linear transformations,
 \item in Section~\ref{s:2d}, we consider a two-dimen\-sional toy example of our theory,
 \item in Section~\ref{s:3d}, we consider a decomposition of a (three-dimen\-sional) $p\times p\times p$ block of $\mathcal R$-operators related to a $3\times 3$ matrix~$A$ with entries in a field of characteristic~$p$,
 \item in Section~\ref{s:3dd}, we specify the mentioned decomposition for the case $p=2$. We
 write out explicit formulas for the spin transformation and do an important observation that they work also in a more general case where matrix~$A$ entries belong not just to a finite field, but to a commutative algebra; we study two specific cases of iterative block making, or ``evolution'', as we call it,
 \item in Section~\ref{s:4d}, we show how some specific commutative algebras allow us to calculate even four-dimen\-sional statistical quantities using a proper version of ``three-dimen\-sional'' methods of Section~\ref{s:3dd},
 \item in Section~\ref{s:w}, we demonstrate how to introduce non-trivial Boltzmann weights into the theory, in such way that self-similarity holds for them as well. This is done on a few specific examples, and involves a study of what can be called `most symmetric probability distributions',
 \item finally, in Section~\ref{s:d}, we discuss our results and possible directions of future research.
\end{itemize}

\section[Basis-induced $\mathcal R$-operators, cubic blocks, and permitted spin configurations]{Basis-induced $\bm{\mathcal R}$-operators, cubic blocks, and permitted spin configurations}\label{s:g}

In Sections \ref{s:g}--\ref{s:4d}, there is no essential difference whether we consider $\mathbb C$-linear or $\mathbb R$-linear $\mathcal R$-operators. For brevity, we speak of them as $\mathbb C$-linear, keeping in mind that $\mathbb C$ can be changed to~$\mathbb R$, if needed, with no problem.

\subsection[Basis-induced $\mathcal R$-operators from linear transformations in a direct sum]{Basis-induced $\bm{\mathcal R}$-operators from linear transformations in a direct sum}\label{ss:p}

\begin{definition}\label{d:Q}
\emph{Quantum space}~$\mathcal X$ corresponding to a finite set~$X$ is the linear space over~$\mathbb C$ whose basis consists of all elements $x\in X$.
\end{definition}

In other words, $\mathcal X$ consists of formal $\mathbb C$-linear combinations of elements $x\in X$.

We will be interested in the case where $X=V$ is a finite-dimen\-sional linear space over a finite field~$F$. Then, if $F$ has $q$ elements and $V$ is $m$-dimen\-sional, the quantum space~$\mathcal V$ corresponding to~$V$ is $q^m$-dimen\-sional.

Also, it is quite clear that the quantum space corresponding to a Cartesian product $X\times Y$ is the \emph{tensor} product $\mathcal X \otimes \mathcal Y$ of the corresponding quantum sets. Recall that in the case of vector spaces, Cartesian product is the same as direct sum.

\begin{definition}\label{d:R}
Let $X$ be a finite set, and $f\colon\;X\to X$ be its mapping into itself. Then \emph{basis-induced $\mathcal R$-operator} corresponding to~$f$ is defined as the $\mathbb C$-linear endomorphism of the corresponding quantum space~$\mathcal X$ sending each of its basis vectors~$x$ into its basis vector $y=f(x)$.
\end{definition}

The case interesting for us will be where $X=V=\bigoplus _{i=1}^d V_i$ is a direct sum of finite-dimen\-sional linear spaces~$V_i$ over a (fixed) finite field~$F$, and $f=A$ is a linear operator acting in~$V$. Operator~$\mathcal R$ acts then in the tensor product $\mathcal V=\bigotimes _{i=1}^d \mathcal V_i$ of the corresponding quantum spaces.

\subsection{Convention: operators and matrices act from the right; matrices act hence on rows}\label{ss:conv}

If, again, $V=\bigoplus _{i=1}^d V_i$ and $A$ is a linear operator acting in~$V$, and if a basis $\{ \mathsf e_1^{(i)},\ldots,\mathsf e_{\dim V_i}^{(i)} \}$ is given in each~$V_i$---and hence their union forms a basis in~$V$---then we can identify $A$ with its matrix (which will also be denoted~$A$ when this does not lead to a misunderstanding), and say that $\mathcal R$ corresponds to matrix~$A$ with entries in field~$F$.

In this connection, we adopt the following convention.
In this paper, linear transformations like $A$ or~$\mathcal R$ will be written as acting on relevant vectors from the \emph{right}! When identifying an operator~$A$ with its matrix, we identify vectors in~$V$ with \emph{row} vectors, on which matrix~$A$ acts again from the right. A given basis in~$V$ is identified, accordingly, with the standard row basis:
\begin{equation*}
\begin{pmatrix} 1 & 0 & \dots & 0 \end{pmatrix},\quad \begin{pmatrix} 0 & 1 & \dots & 0 \end{pmatrix}, \quad \ldots, \quad \begin{pmatrix} 0 & 0 & \dots & 1 \end{pmatrix}. 
\end{equation*}

\subsection{Products of operators acting in different spaces}\label{ss:pd}

We list here some properties of $\mathcal R$-operators that are quite obvious from their definition:
\begin{itemize}\itemsep 0pt
 \item $\mathcal R$-operator corresponding to the identity operator $A=\mathbbm 1_V$ in $F$-linear space~$F$ acts also as identity $\mathcal R=\mathbbm 1_{\mathcal V}$ in its quantum space~$\mathcal V$,
 \item $\mathcal R$-operator corresponding to the product $A_1 \cdots A_m$ of several linear operators acting in the same $F$-linear space~$V$ is the product $\mathcal R = \mathcal R_1 \cdots \mathcal R_m$ of the corresponding $\mathcal R$-operators,
 \item $\mathcal R$-operator corresponding to the direct sum $\bigoplus _{j=1}^p A_j$ of several linear operators acting each in its own $F$-linear space~$V_j$ is the tensor product $\bigotimes _{j=1}^p \mathcal R_j$ of the corresponding $\mathcal R$-operators and acts, accordingly, in the tensor product $\mathcal V = \bigotimes _{j=1}^p \mathcal V_j$ of the corresponding quantum spaces.
\end{itemize}

Let now there be some number~$N$ of linear spaces~$V_i$, and some number~$m$ of $F$-linear operators~$A_k$ such that each~$A_k$ acts in the direct sum of only \emph{some} of spaces~$V_i$. We want to give sense to the product of all these~$A_k$.

The standard well-known way of doing this is as follows: extend the action of each~$A_k$ onto the whole direct sum $\bigoplus _{i=1}^N V_i$ of \emph{all} spaces~$V_i$ as follows:
\begin{equation}\label{eA}
A_k \to A_k \oplus \mathbbm 1_{\mathrm{remaining}},
\end{equation}
where $\mathbbm 1_{\mathrm{remaining}}$ means the identity operator acting in the direct sum of those spaces where $A_k$ does \emph{not} act. Then, the usual product of the right-hand sides of~\eqref{eA} is taken for $k=1,\ldots,m$. Following tradition, we can write this product simply as $\prod _{k=1}^m A_k$, tacitly identifying each~$A_k$ with the r.h.s.\ of~\eqref{eA}.

The same applies to the product of the corresponding $\mathcal R$-operators, with the understanding that direct sums are replaced with tensor products: each $\mathcal R_k$ is tacitly identified with $\mathcal R_k \otimes \mathbbm 1_{\mathrm{remaining}}$, where $\mathbbm 1_{\mathrm{remaining}}$ means this time the identity operator acting in the tensor product of those spaces where $\mathcal R_k$ does not act.

\subsection{Cubic blocks: definition}\label{ss:b}

Consider the integer lattice~$\mathbb Z^d$ within the $d$-dimen\-sional real space with coordinates~$x_i$:
\begin{equation*}
\mathbb Z^d \subset \mathbb R^d \ni \mathbf x = (x_1,\ldots,x_d) ,
\end{equation*}
and introduce the following partial order on it:
\begin{equation}\label{preceq}
\mathbf x \preceq \mathbf y \quad\Leftrightarrow\quad x_i \le y_i \quad \text{for all }\,i.
\end{equation}

Let there be given finite-dimen\-sional $F$-linear spaces~$V_i$, \ $i=1,\ldots,d$---remember that $F$ is a finite field---and a linear operator~$A$ acting in~$\bigoplus _{i=1}^d V_i$.

Consider then the part~$C$ of the lattice contained in the (closed) cube with an integer edge length~$(l-1)$, as follows:
\begin{equation*}
C\subset \mathbb Z^d\colon \quad 0\le x_i\le l-1, \quad i=1,\ldots,d.
\end{equation*}
Note that there are $l$ lattice points along each \emph{edge} of the cube.

Consider all straight lines parallel to coordinate axes and going through the points of~$C$. To each such line, if it is parallel to the $i$-th axis, we put in correspondence a copy of space~$V_i$, and to each point $\mathbf x\in C$ we put in correspondence a copy of the operator~$A$, acting in those copies of spaces~$V_i$ that correspond to the lines intersecting at~$\mathbf x$. We denote such copy~$A^{(\mathbf x)}$.

\begin{definition}\label{d:b}
Cubic \emph{$\underbrace {l\times \dots \times l}_d$ block} of operators~$A$ is the product~$R$ of all copies~$A^{(\mathbf x)}$, taking in any order that agrees with the partial order relation:
\begin{equation}\label{bl}
R = \prod _{\mathbf x\in C} A^{(\mathbf x)}, \qquad \mathbf x \prec \mathbf y \; \Rightarrow \; A^{(\mathbf x)} \text{ precedes } A^{(\mathbf y)}.
\end{equation}
\end{definition}

\begin{proposition}\label{p:pp}
$R$ taken according to~\eqref{bl} exists, and does not depend on a specific choice of order of the copies~$A^{(\mathbf x)}$.
\end{proposition}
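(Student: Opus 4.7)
The plan is to reduce the proposition to a single combinatorial observation, namely that any two incomparable lattice points $\mathbf x, \mathbf y\in C$ give rise to operators $A^{(\mathbf x)}, A^{(\mathbf y)}$ that commute once they have been extended to the full direct sum as in~\eqref{eA}.

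The key geometric fact is that two points $\mathbf x, \mathbf y\in C$ lie on a common line parallel to a coordinate axis if and only if they differ in exactly one coordinate; and if they differ in exactly one coordinate they are automatically comparable in the partial order~\eqref{preceq}. Taking the contrapositive: if $\mathbf x$ and $\mathbf y$ are incomparable, then they differ in at least two coordinates, so they share no coordinate line, so the operators $A^{(\mathbf x)}$ and $A^{(\mathbf y)}$ act on disjoint collections of spaces $V_i$. After the extension $A^{(\mathbf x)}\to A^{(\mathbf x)}\oplus\mathbbm 1_{\mathrm{remaining}}$ described in Subsection~\ref{ss:pd}, such operators commute, since in a direct-sum decomposition $V=U_1\oplus U_2\oplus U_3$ one has $(B\oplus\mathbbm 1\oplus\mathbbm 1)(\mathbbm 1\oplus C\oplus\mathbbm 1)=(\mathbbm 1\oplus C\oplus\mathbbm 1)(B\oplus\mathbbm 1\oplus\mathbbm 1)$.

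Given this, existence of $R$ follows because any finite poset admits a linear extension (a topological sort of~$C$), which provides at least one valid ordering of the copies $A^{(\mathbf x)}$. For independence of the choice, I would invoke the standard fact that any two linear extensions of a finite poset can be transformed into one another by a finite sequence of swaps of consecutive incomparable elements. Each such swap preserves the value of the product by the commutation observation above, and the result follows by induction on the number of swaps.

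The main (and only) subtle point is making sure the commutation in the third paragraph really refers to the \emph{extended} operators in the common ambient space $\bigoplus_i V_i$, rather than to the raw $A^{(\mathbf x)}$, $A^{(\mathbf y)}$ which act in different spaces to begin with; once this identification is made explicit, nothing remains to check.
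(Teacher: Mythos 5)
Your proof is correct and follows essentially the same strategy as the paper's. Both arguments hinge on the identical key observation — incomparable points of $C$ differ in at least two coordinates, hence share no coordinate line, hence their (extended) operators commute — and both then handle existence via a linear extension of the poset and independence via adjacent transpositions of incomparable neighbors; the only cosmetic difference is that the paper exhibits a specific linear extension (grading by $x_1+\dots+x_d$) and reduces every other ordering to it, while you invoke the general facts that finite posets have linear extensions and that any two are connected by swaps of consecutive incomparable elements.
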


\begin{proof}
Existence: $R$ can be taken as
\begin{equation}\label{po}
R = \prod _{k=0}^{d(l-1)}\, \prod _{x_1+\dots+x_d=k} A^{(\mathbf x)}.
\end{equation}
Every two operators $A^{(\mathbf x)}$ and~$A^{(\mathbf y)}$ in the \emph{inner} product act nontrivially (not as identity) in two disjoint $d$-tuples of copies of spaces~$V_i$, and hence commute; so, the inner product can be taken in any order.

Uniqueness: let there be a product~\eqref{bl} where there are two \emph{neighbors}\/ $A^{(\mathbf x)}$ and~$A^{(\mathbf y)}$ such that $x_1+\dots+x_d > y_1+\dots+y_d$ but $\mathbf x$ and~$\mathbf y$ are not comparable (note that $\mathbf x \prec \mathbf y$ cannot be). Then, a small reasoning shows that $A^{(\mathbf x)}$ and~$A^{(\mathbf y)}$ act again in disjoint sets of spaces and hence commute, so we can swap their places without changing~$R$. Repeating such steps, we arrive at the form~\eqref{po}.
\end{proof}

In this context, we often call $A$ or its copies ``bricks'' from which block~$R$ is assembled.

Clearly, this construction can be generalized to parallelepipeds, and to many other cases if needed.

\subsection{Cubic blocks as bricks}\label{ss:bb}

Recall that a copy of $F$-linear space~$V_i$ was put in correspondence, in the previous Subsection~\ref{ss:b}, to each straight line parallel to the $i$-th coordinate axis and going through (some) points of our ``integer cube''~$C$. We can now introduce ``thick space''~$\mathbf V_i$ corresponding to $i$-th dimension as the direct sum of all such copies:
\begin{equation}\label{Vb}
\mathbf V_i = \bigoplus_{\substack{ \text{all lines where}\\ \text{copies of }V_i\text{ belong}}} (\text{copies of }V_i).
\end{equation}

Block~$R$~\eqref{bl} is then an $F$-linear operator acting in~$\bigoplus _{i=1}^{d} \mathbf V_i$. That is, $R$ has, essentially, the same nature as our ``brick''~$A$; in particular, $R$ determines a (``thick'') basis-induced $\mathcal R$-operator, and $R$ can be used as a brick for constructing an even thicker block in the same way as $R$ was constructed from~$A$.

Then, an ``evolution'' can be launched by iterating this block making construction. Remarkably, the resulting large blocks reveal, in many cases, an unexpectedly simple algebraic structure.

\subsection{Spins and permitted spin configurations}\label{ss:pc}

Straight lines introduced in Subsection~\ref{ss:b} are divided into segments, or ``edges'', by the points of the ``integer cube''~$C$, some of these edges being half-infinite. Every edge has actually a direction---that of the corresponding coordinate axis, so $d$ incoming and $d$ outgoing edges meet at each vertex (\,=\,point) $\mathbf c\in C$.

Suppose now that a vector is assigned to each edge, namely, an element of linear space~$V_i$ if the edge is parallel to the $i$-th axis.

\begin{definition}\label{d:pv}
Vectors assigned to edges are \emph{consistent around vertex~$\mathbf c$} if the vectors at its incoming edges are transformed by~$A$ into the vectors at its outgoing edges. If this holds for all $\mathbf c\in C$, these vectors are said to form a \emph{permitted configuration}.
\end{definition}

We say that the vectors attached to incoming, resp.\ outgoing, edges at a vertex~$\mathbf c$ form the \emph{input}, resp.\ \emph{output}, of the corresponding~$A$. Similarly, all the vectors attached to all the incoming, resp.\ outgoing, \emph{half-infinite} (that is, not inner) edges of a block~$R$, form the \emph{input}, resp.\ \emph{output}, of~$R$. If only edges parallel to the $i$-th axis are taken, we speak of the \emph{$i$-th input/output}.

Any conditions/restrictions imposed on the input and/or output vectors are called \emph{boundary conditions}.

In this paper, spaces~$V_i$ will typically be one-dimen\-sional, thus, every (copy of) $V_i$ can be identified with the field~$F$. In this case, we often call elements of~$F$ attached to edges ``spins''.

We give the following natural definition.

\begin{definition}\label{d:ps}
\emph{Permitted spin configuration with given boundary conditions} is a permitted spin configuration in the sense of Definition~\ref{d:pv}, satisfying also the specified boundary conditions.
\end{definition}

An interesting statistical quantity is the number of permitted spin configuration with given boundary conditions. In this paper, we consider combinations of the following linear conditions, each involving the spins at two opposite faces of our integer cube---$i$-th input and $i$-th output:
\begin{enumerate}\itemsep 0pt
 \item \label{i:p} periodic boundary conditions along the $i$-th axis: the spins at the $i$-th output face must coincide with the corresponding spins at the $i$-th input face,
 \item \label{i:g} ``$i$-th input all zeros'': all spins at the $i$-th input are fixed at zeros, while the spins at the $i$-th output are free,
 \item \label{i:f} ``free $i$-th input and output'': no conditions on the corresponding spins.
\end{enumerate}

\begin{proposition}\label{p:nc}
The number of permitted spin configurations with toric (periodic in all $d$ dimensions) boundary conditions for a block~$R$ is~$|F|^{\dim E_1}$, where $|F|$ is the number of elements in~$F$, and $E_1$ is the eigenspace of~$R$ corresponding to eigenvalue~1.
\end{proposition}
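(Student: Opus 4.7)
The plan is to identify permitted toric spin configurations with vectors fixed by~$R$ on the ``thick'' input/output space $\bigoplus_{i=1}^d \mathbf V_i$, and then count that fixed-point set as an $F$-linear subspace.

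First I would prove a causality/propagation statement: any assignment of spins to the half-infinite \emph{input} edges of the block, i.e.\ any vector $v \in \bigoplus_{i=1}^d \mathbf V_i$, extends uniquely to a permitted configuration on every internal edge and every output edge, and the resulting collection of output spins equals $v R$. This is shown by induction along the partial order~\eqref{preceq}: at the minimal vertex $\mathbf 0 = (0,\ldots,0)$ all $d$ incoming edges are input half-infinite edges, so $A^{(\mathbf 0)}$ converts the prescribed input spins into the outgoing spins; for a general $\mathbf c\in C$, each incoming edge is either an input half-infinite edge or the outgoing edge of some $\mathbf b\prec \mathbf c$, whose spin is already fixed by the inductive hypothesis, and $A^{(\mathbf c)}$ then determines the outgoing spins at~$\mathbf c$. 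That the net map $v\mapsto v_{\mathrm{out}}$ agrees with~$R$ is a direct reading of~\eqref{bl} together with the extension convention~\eqref{eA}: the factors $A^{(\mathbf c)}$ are multiplied in an order compatible with~$\prec$, and each acts nontrivially only on the $2d$ lines meeting at~$\mathbf c$.

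Next, I would observe that the toric boundary condition requires, line by line, that the output spin equal the input spin, which is exactly $v R = v$. Conversely, any $v\in\ker(R - \mathbbm 1)$ yields, via the propagation above, a permitted configuration whose input and output spins agree on every line, i.e.\ a permitted toric configuration; and distinct $v$ give distinct configurations, since $v$ can be read off the configuration as its input data. Hence permitted toric configurations are in bijection with $\ker(R - \mathbbm 1) = E_1$, an $F$-linear subspace, so their number is $|F|^{\dim E_1}$.

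The only substantive ingredient is the propagation step; the counting that concludes the argument is elementary linear algebra over a finite field. I expect no real obstacle, because the well-definedness of $A$ as a map from local inputs to local outputs is precisely what makes ``input determines the whole configuration'' valid both locally at each vertex and globally across the block.
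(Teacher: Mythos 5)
Your proof is correct and follows the same route as the paper's: the paper also identifies permitted toric configurations with the fixed vectors of $R$ acting on the input data and counts $|E_1| = |F|^{\dim E_1}$. You simply make explicit the propagation step (inputs determine the whole configuration via the partial-order induction, and the net input-to-output map is $R$), which the paper leaves implicit.
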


\begin{proof}
Indeed, the opposite spins must coincide, which means that the output vector must coincide with the input vector. This happens exactly when each of them belongs to~$E_1$, and the number of vectors in~$E_1$ is~$|F|^{\dim E_1}$.
\end{proof}

\subsection{Gauge transformations}\label{ss:gauge}

Suppose we have chosen bases in spaces~$V_i$ (or $\mathbf V_i$, as in~\eqref{Vb})and thus realized them as row spaces. If we pass to different bases, row vectors $v_i\in V_i$ undergo transformations $v_i \mapsto v_i g_i$ with some matrices~$g_i\in \Aut V_i$. Then, the following conjugation with a block-diagonal matrix applies to matrix~$R$ (or~$A$):
\begin{equation}\label{gt}
R \mapsto G^{-1} R G,\qquad G = \begin{pmatrix} g_1 & & \\ & \ddots & \\ & & g_d \end{pmatrix}.
\end{equation}

Below, we call this \emph{gauge transformation}, and call ``$R$ and~$G^{-1} R G$ are (gauge) equivalent''. We can also say that ``matrix~$R$ becomes $G^{-1} R G$ in a proper basis''.

We will see that matrices corresponding to our blocks may simplify drastically under some gauge transformations. In this connection, we make the following simple but important remark: \emph{boundary conditions chosen according to any combination of the above items \ref{i:p}--\ref{i:f} remain the same under a gauge transformation}.

\section{Two dimensions: a toy theory}\label{s:2d}

\subsection[Decomposition of a $2\times 2$ block in characteristic two]{Decomposition of a $\bm{2\times 2}$ block in characteristic two}\label{ss:2*2}

The first observation, and a toy example of what we will do in the next sections, comes with making a $2\times 2$ block of matrices
\begin{equation}\label{A2}
A = \begin{pmatrix} a & b \\ c & d \end{pmatrix}
\end{equation}
Let the entries in~\eqref{A2} belong to a field~$F$, and let $F$ be, for a moment, of characteristic~0. Make a $2\times 2$ block as in Figure~\ref{f:2d}.
\begin{figure}
 \centering
 \includegraphics[scale=0.8]{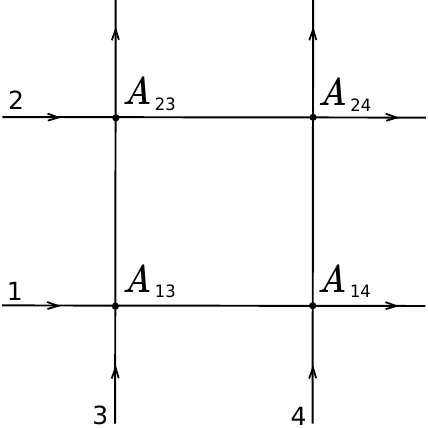}
 \caption{$2\times 2$ block~$R$ in two dimensions. Numbers denote copies of field~$F$; matrix~$R$ acts, accordingly, in~$F^4$}
 \label{f:2d}
\end{figure}
The product of four copies of~$A$ in Figure~\ref{f:2d} can be written as
\begin{equation}\label{RA}
R = A_{13} A_{14} A_{23} A_{24},
\end{equation}
where numbers denote the spaces where $A$ acts nontrivially, and we find that
\begin{equation}\label{R2}
R = \begin{pmatrix} R_{\mathbf{11}} & R_{\mathbf{12}} \\ R_{\mathbf{21}} & R_{\mathbf{22}} \end{pmatrix} =
\left(\begin{array}{cc|cc} a^2 & 2 a b c & b d & a b d + b ^2 c\\
0 & a^2 & b & a b\\ \hline
a c & a c d+b c^2 & d^2 & 2 b c d\\
c & c d & 0 & d^2 \end{array} \right),
\end{equation}
where the thick subscript~$\mathbf 1$ unites usual subscripts 1 and~2, while~$\mathbf 2$ unites 3 and~4.

Explicit expression~\eqref{R2} leads immediately to some observations: if we change~$F$ to a field of \emph{characteristic two}, then
\begin{itemize}\itemsep 0pt
 \item $R_{\mathbf{11}}$ and~$R_{\mathbf{22}}$ become scalar (multiples of the identity matrix~$\mathbbm 1$),
 \item $R_{\mathbf{12}}R_{\mathbf{21}}$ becomes equal to $R_{\mathbf{21}}R_{\mathbf{12}}$
 \item and, moreover, $R_{\mathbf{12}}R_{\mathbf{21}}=R_{\mathbf{21}}R_{\mathbf{12}}= b^2 c^2 \mathbbm 1$ is also scalar!
\end{itemize}

\begin{proposition}\label{p:2d}
If entries in matrix~$A$ belong to a field of characteristic~2, then, in a proper basis in each thick space, $R$ is the direct sum of two copies of $\begin{pmatrix} a^2 & b^2 \\ c^2 & d^2 \end{pmatrix}$. Namely, if we take basis vectors $\mathsf e_1=\begin{pmatrix} 1 & 0 \end{pmatrix}$ and $\mathsf e_2=\begin{pmatrix} 0 & 1 \end{pmatrix}$ in the first (horizontal) thick space, and vectors $\mathsf e_1 R_{\mathbf{12}}/b^2$ and $\mathsf e_2 R_{\mathbf{12}}/b^2$ in the second (vertical) thick space, then $R$ transforms into
\begin{equation*}
\begin{pmatrix} a^2 & 0 & b^2 & 0 \\ 0 & a^2 & 0 & b^2 \\ c^2 & 0 & d^2 & 0 \\ 0 & c^2 & 0 & d^2 \end{pmatrix}.
\end{equation*}
\end{proposition}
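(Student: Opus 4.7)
The plan is to realise the claimed change of basis as a single gauge transformation~\eqref{gt} with $g_1=\mathbbm 1$ in the horizontal thick space and a suitable $g_2$ in the vertical thick space, and then verify directly that the four $2\times 2$ blocks of $G^{-1}RG$ take the required form. Because of the row-vector convention of Subsection~\ref{ss:conv}, the new basis vectors are the \emph{rows of} $g_2^{-1}$, so the prescription $\mathsf e_j\mapsto \mathsf e_j R_{\mathbf{12}}/b^2$ forces $g_2^{-1}=R_{\mathbf{12}}/b^2$, i.e.\ $g_2=b^2 R_{\mathbf{12}}^{-1}$. Pinning this down correctly is the first small nuance; it also makes the implicit general-position assumption visible, namely $b\ne 0$ and $c\ne 0$, since a direct computation in characteristic two gives $\det R_{\mathbf{12}}=b^3 c$, so that $R_{\mathbf{12}}^{-1}$ exists.

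I would then harvest from the explicit form~\eqref{R2}, after setting $2abc=2bcd=0$, the three scalar identities
\begin{equation*}
 R_{\mathbf{11}}=a^2\mathbbm 1,\qquad R_{\mathbf{22}}=d^2\mathbbm 1,\qquad R_{\mathbf{12}} R_{\mathbf{21}}=b^2 c^2\mathbbm 1.
\end{equation*}
The first two are immediate on inspection of the diagonal $2\times 2$ blocks; the third is exactly what was announced in the bullet list preceding the proposition, and amounts to a short direct multiplication in which the cross-terms cancel in pairs because $2abcd=0$.

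Given these identities, the block-wise calculation of $R':=G^{-1}RG$ is purely formal:
\begin{align*}
 R'_{\mathbf{11}} &= R_{\mathbf{11}} = a^2\mathbbm 1,\\
 R'_{\mathbf{12}} &= R_{\mathbf{12}}\,g_2 = R_{\mathbf{12}}\,(b^2 R_{\mathbf{12}}^{-1}) = b^2\mathbbm 1,\\
 R'_{\mathbf{21}} &= g_2^{-1} R_{\mathbf{21}} = b^{-2} R_{\mathbf{12}} R_{\mathbf{21}} = c^2\mathbbm 1,\\
 R'_{\mathbf{22}} &= g_2^{-1} R_{\mathbf{22}}\,g_2 = R_{\mathbf{22}} = d^2\mathbbm 1,
\end{align*}
the last line using that the scalar matrix $R_{\mathbf{22}}$ commutes with $g_2$. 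This reproduces exactly the displayed matrix in the proposition; a final reordering of basis vectors in the order $(1,3,2,4)$ exhibits it as the direct sum of two copies of $\begin{pmatrix}a^2&b^2\\c^2&d^2\end{pmatrix}$.

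There is no genuine obstacle here: the whole content of the proposition sits in the three characteristic-two identities above, all of which are visible from~\eqref{R2}. The only subtle point, and the only thing that can realistically go wrong, is the row/column convention that inverts the relationship between $g_2$ and the list of new basis vectors.
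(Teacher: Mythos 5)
Your proof is correct and essentially reproduces what the paper leaves implicit, since the paper's own argument is simply the one-line remark that the proposition ``follows directly from the above observations'' --- namely the three characteristic-two identities $R_{\mathbf{11}}=a^2\mathbbm 1$, $R_{\mathbf{22}}=d^2\mathbbm 1$, and $R_{\mathbf{12}}R_{\mathbf{21}}=b^2c^2\mathbbm 1$ read off from~\eqref{R2}. Your additional observation that $\det R_{\mathbf{12}}=b^3c$, so that the stated change of basis tacitly requires $b\ne 0$ and $c\ne 0$, is a genuine (if small) refinement of a general-position assumption that the paper does not spell out.
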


\begin{proof}
This follows directly from the above observations.
\end{proof}

So, essentially, matrix~$A$, when making a $2\times 2$ block, undergoes the \emph{Frobenius automorphism}, and multiplies into two copies. The corollary below follows immediately.

\begin{corollary}\label{c:n}
The iterated block (see Subsection~\ref{ss:b}) is, after\/ $n$ iterations, equivalent to $2^n$ copies of matrix $\begin{pmatrix} a^{2^n} & b^{2^n} \\ c^{2^n} & d^{2^n} \end{pmatrix}$.
\qed
\end{corollary}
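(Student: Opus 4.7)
The plan is to induct on $n$, with base case $n=1$ given by Proposition~\ref{p:2d} itself. For the inductive step, I would assume that the $(n-1)$-th iterate $R_{n-1}$ is gauge-equivalent to a direct sum of $2^{n-1}$ copies of $A_{n-1} := \begin{pmatrix} a^{2^{n-1}} & b^{2^{n-1}} \\ c^{2^{n-1}} & d^{2^{n-1}} \end{pmatrix}$. The $n$-th iteration builds a $2\times 2$ block whose bricks are copies of $R_{n-1}$. The first move is to replace each brick by its gauge-equivalent direct-sum form, the same basis change being applied simultaneously to every occurrence of the brick via the block-diagonal transformation~\eqref{gt} on the thick spaces.

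Next I would invoke a small distributivity lemma: $2\times 2$ block-making commutes with direct sums of bricks. Namely, if a brick splits as $B = B_1\oplus B_2$ with its two spaces decomposing compatibly, then all four copies in a $2\times 2$ block respect the decomposition, and the block equals the direct sum of the $2\times 2$ block of $B_1$'s and the $2\times 2$ block of $B_2$'s. This is a direct consequence of the product-of-direct-sums rule recalled in Subsection~\ref{ss:pd}. Applying this lemma to the $2^{n-1}$-fold decomposition of $R_{n-1}$ reduces the $n$-th iterate, up to gauge, to $2^{n-1}$ independent copies of the $2\times 2$ block of $A_{n-1}$'s. Then Proposition~\ref{p:2d}, applied to $A_{n-1}$ (whose entries still lie in the same characteristic-$2$ field), turns each such block into two copies of $\begin{pmatrix} a^{2^n} & b^{2^n} \\ c^{2^n} & d^{2^n} \end{pmatrix}$, producing $2^{n-1}\cdot 2 = 2^n$ copies of the Frobenius-iterated matrix in total.

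The only delicate point, and what I expect to be the main obstacle, is the distributivity lemma. Intuitively it is transparent, but a fully rigorous treatment requires tracking how the block-diagonal gauge matrices $g_1, g_2$ of Proposition~\ref{p:2d} at the current level interact with the finer direct sum structure inherited from previous iterations, verifying that they act on each summand independently so that the gauge class is preserved when the distributive step is performed. Once that bookkeeping is carried out, the induction closes immediately.
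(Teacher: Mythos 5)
Your induction is correct and is evidently the argument the paper has in mind: the paper gives no explicit proof (it simply notes the corollary ``follows immediately'' from Proposition~\ref{p:2d} and closes with \verb|\qed|), and your base case plus the two inductive ingredients---(i) gauge-equivalence is preserved under block-making because a gauge $g$ on the brick lifts to the block-diagonal gauge $g\oplus g$ on each thick space, and (ii) block-making distributes over a compatible direct-sum decomposition of the brick, since the $B_1$- and $B_2$-parts of the four copies in $R=A_{13}A_{14}A_{23}A_{24}$ act in disjoint summands and hence commute, exactly as in Subsection~\ref{ss:pd}---are precisely what makes ``immediate'' rigorous. The bookkeeping you flag as the delicate point is genuinely routine: the per-summand gauge matrices from Proposition~\ref{p:2d} assemble into a single block-diagonal~$G$ as in~\eqref{gt}, so the gauge class is preserved and the induction closes.
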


\subsection{Other characteristics}\label{ss:oc}

The observations in the previous subsection can be generalized, with some moderate effort, to any finite field characteristic~$p$. The ``brick'' matrix~$A$ has again form~\eqref{A2}, while the analogue of~$R$ of Figure~\ref{f:2d} will now be a $p\times p$ block. We introduce, as before, thick subscripts $\mathbf 1$ and~$\mathbf 2$, corresponding to the horizontal and vertical directions, respectively, and write
\begin{equation}\label{R2p}
R = \begin{pmatrix} R_{\mathbf{11}} & R_{\mathbf{12}} \\ R_{\mathbf{21}} & R_{\mathbf{22}} \end{pmatrix},
\end{equation}
as in~\eqref{R2}, but each $R_{\mathbf{ij}}$ is now a $p\times p$ matrix.

\begin{proposition}\label{p:Rp}
In~\eqref{R2p},
\begin{equation}\label{Rij}
R_{\mathbf{11}} = a^p \mathbbm 1_p, \qquad R_{\mathbf{22}} = d^p \mathbbm 1_p, \qquad R_{\mathbf{12}}R_{\mathbf{21}} = R_{\mathbf{21}}R_{\mathbf{12}} = b^p c^p \mathbbm 1_p .
\end{equation}
\end{proposition}

\begin{proof}
Each matrix entry of any~$R_{\mathbf{ij}}$ is a sum of products of matrix entries $a,b,c,d$ of~$A$. We would like to write these letters $a,b,c,d$ in each such product in the same order as we used in Definition~\ref{d:b}. In such order, as one can quite easily see,
\begin{itemize}\itemsep 0pt
 \item $a$ can be followed only by $a$ or~$b$,
 \item $b$ can be followed only by $c$ or~$d$,
 \item $c$ can be followed only by $a$ or~$b$,
 \item $d$ can be followed only by $c$ or~$d$.
\end{itemize}
This can be explained graphically by saying that $a$ corresponds to going through one~$A$ along the horizontal direction, $d$ corresponds similarly to going along the vertical direction, while $b$ changes horizontal to vertical, and $c$---vice versa, see Figure~\ref{f:abcd}.
\begin{figure}
 \centering
 \includegraphics[scale=1.3]{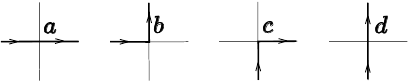}
 \caption{Matrix entries of~$A$ and corresponding fragments of ways through the block~$R$}
 \label{f:abcd}
\end{figure}

For $R_{\mathbf{11}}$, moreover, such product must begin with either $a$ or~$b$, and end with either $a$ or~$c$. Hence, it can be thought of as consisting of some strings of one of the two following kinds: $a$ or~$b d^l c$, where $l \ge 0$. The whole number of such strings must be~$p$, as we must cross our $p\times p$ block~$R$ in the horizontal direction. Moreover, there must be at least one~$a$---otherwise, we would get outside our block in the vertical direction.

One possible product for $p=5$ is depicted in Figure~\ref{f:5*5}.
\begin{figure}
 \centering
 \includegraphics[scale=1.3]{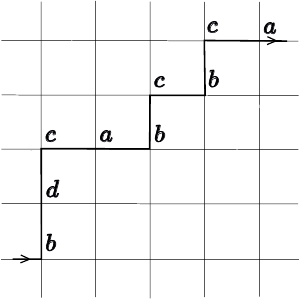}
 \caption{Graphic representation of one product of matrix entries of~$A$ entering as a summand in one matrix entry of~$R_{\mathbf{11}}$, for $p=5$}
 \label{f:5*5}
\end{figure}

For a fixed matrix entry of~$R_{\mathbf{11}}$, we unite all such products having the same strings of the form~$b d^l c$, going in the same order, in an equivalence class. There are then $\binom{p}{k}$ products in such equivalence class, where $k$ is the number of the~$a$'s, $1\le k \le p$.

If $k=p$, there are no strings~$b d^l c$, and we get a diagonal entry~$a^p$ of~$R_{\mathbf{11}}$.

For a non-diagonal entry, $1\le k < p$, and $\binom{p}{k}$ is a multiple of~$p$, that is, zero modulo~$p$. We have proved the first equality in~\eqref{Rij}; also, the second equality is proved similarly.

Consider now the product $R_{\mathbf{12}} R_{\mathbf{21}}$. We argue that its only nonzero matrix entries are diagonal, and correspond to ``ladders'' such as depicted in Figure~\ref{f:BC}.
\begin{figure}
 \centering
 \includegraphics[scale=1.3]{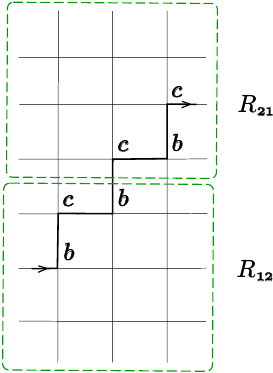}
 \caption{One of the ``ladders'' giving nonzero entries of $R_{\mathbf{12}} R_{\mathbf{21}}$ for $p=3$}
 \label{f:BC}
\end{figure}

Indeed, if there is at least one~$a$ in the product, then we can include it in an equivalence class giving together zero due to the same reasons as we considered above.

If there are no~$a$'s, but at least one~$d$ is present, denote $k$ the number of such~$d$'s. These $d$'s are situated between the neighboring $b$'s and~$c$'s:
\begin{equation}\label{bdddc}
b d \dots d c ,
\end{equation}
and there are $p$ strings~\eqref{bdddc}, because we must do $p$ steps in the horizontal direction. Also, $k<p$, because otherwise we would go too far in the vertical direction.

Allocating our $d$'s in the $p$ strings of type~\eqref{bdddc} in different ways, we see that there are $\binom{k+p-1}{k}$ ways to do so, which again vanishes modulo~$p$, because $1\le k < p$.

The proof ends with noting that the ladders in Figure~\ref{f:BC} give of course diagonal entries~$b^p c^p$.
\end{proof}

\begin{corollary}\label{c:pn}
In a proper basis in each thick space, $R$ becomes the direct sum of $p$ copies of matrix $\begin{pmatrix} a^p & b^p \\ c^p & d^p \end{pmatrix}$.
\end{corollary}

\begin{proof}
A straightforward modification of the proof of Proposition~\ref{p:2d}.
\end{proof}

\section{Blocks in three dimensions}\label{s:3d}

\subsection{The key results}\label{ss:3dr}

We start with a ``brick''
\begin{equation}\label{A}
A = \begin{pmatrix} a_{11} & a_{12} & a_{13} \\ a_{21} & a_{22} & a_{23} \\ a_{31} & a_{32} & a_{33} \end{pmatrix}
\end{equation}
having entries in a field~$F$ of a finite characteristic~$p$.

Then we construct a $p\times p\times p$ block. For $p=2$, it is shown in Figure~\ref{f:3d},
\begin{figure}
 \centering
 \includegraphics[scale=1]{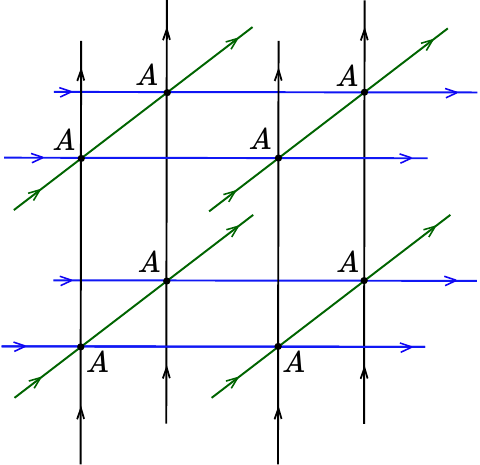}
 \caption{$2\times 2\times 2$ block~$R$ in three dimensions}
 \label{f:3d}
\end{figure}
where four lines of each of the three colors (and directions) correspond together to one of the three ``thick'' spaces. We denote this block as
\begin{equation}\label{R}
R = \begin{pmatrix} R_{11} & R_{12} & R_{13} \\ R_{21} & R_{22} & R_{23} \\ R_{31} & R_{32} & R_{33} \end{pmatrix} ,
\end{equation}
where we do not write bold subscripts any longer, but each $R_{ij}$ acts of course from the $i$-th \emph{thick} space into the $j$-th thick space, like in Section~\ref{s:2d}.

The analogue of Proposition~\ref{p:Rp} still holds.

\begin{proposition}\label{p:B3d}
$\mathrm{(a)}$ Diagonal entries $R_{11}$, $R_{22}$ and~$R_{33}$ in~\eqref{R} are scalar, namely, $R_{ii}=a_{ii}^p \cdot \mathbbm 1$.

$\mathrm{(b)}$ Besides, $R_{kl}R_{lk}=R_{lk}R_{kl}$ for $1\le k<l\le 3$, and all these products are also scalars, namely, $a_{kl}^2 a_{lk}^2 \cdot \mathbbm 1$.
\end{proposition}

\begin{proof}
A straightforward modification of the proof of Proposition~\ref{p:Rp}.
\end{proof}

There is one more remarkable property, concerning the product
\begin{equation}\label{RRR}
R_{12}R_{23}R_{31}.
\end{equation}
We formulate it as the following Theorem~\ref{th:p}.

\begin{theorem}\label{th:p}
For matrix~$A$~\eqref{A} belonging to a field~$F$ of any finite characteristic~$p$, the Jordan form of the product~\eqref{RRR} consists of cells with eigenvalue $a_{12}^p a_{23}^p a_{31}^p$, their total dimension (``algebraic multiplicity'' of the mentioned eigenvalue) being\/~$\dfrac{p(p+1)}{2}$, and cells with eigenvalue $a_{13}^p a_{32}^p a_{21}^p$, their total dimension being\/~$\dfrac{p(p-1)}{2}$.

Moreover, if the following condition holds:
\begin{equation}\label{ineq}
a_{12}a_{23}a_{31} \ne a_{13}a_{32}a_{21} ,
\end{equation}
then $R$ is\/ \emph{diagonalizable}---all Jordan cells have dimension one.
\end{theorem}

The proof of Theorem~\ref{th:p} will be given in the form of Lemmas~\ref{l:k}--\ref{l:m} in the next Subsection~\ref{ss:3dl}. Here, we prove its following corollary---shedding light on the key role of the product~\eqref{RRR}.

\begin{corollary}\label{p:diag}
Assuming~\eqref{ineq}, consider the following two matrices, each being the transpose of the other:
\begin{equation}\label{a^p}
\mathsf r^{\mathrm T} = 
\begin{pmatrix} a_{11}^p & a_{21}^p & a_{31}^p \\ a_{12}^p & a_{22}^p & a_{32}^p \\ a_{13}^p & a_{23}^p & a_{33}^p \end{pmatrix}
 \qquad \text{and} \qquad
\mathsf r = 
\begin{pmatrix} a_{11}^p & a_{12}^p & a_{13}^p \\ a_{21}^p & a_{22}^p & a_{23}^p \\ a_{31}^p & a_{32}^p & a_{33}^p \end{pmatrix}.
\end{equation}
Choosing a proper basis in each thick space, we can bring\/ $R$ to the form of the direct sum of\/ $\dfrac{p(p-1)}{2}$ matrices~$\mathsf r^{\mathrm T}$ and\/ $\dfrac{p(p+1)}{2}$ matrices~$\mathsf r$.
\end{corollary}

\begin{proof}
First, take such basis in the first thick space where $R_{12}R_{23}R_{31}$ is diagonal. Denote this basis $\{ \mathsf f_1^{(1)}, \ldots, \mathsf f_{p(p-1)/2}^{(1)}, \mathsf e_1^{(1)}, \ldots, \mathsf e_{p(p+1)/2}^{(1)} \}$, where basis vectors~$\mathsf f_i^{(1)}$ correspond to eigenvalue $a_{13}^p a_{32}^p a_{21}^p$, and basis vectors~$\mathsf e_i^{(1)}$---to eigenvalue $a_{12}^p a_{23}^p a_{31}^p$; the superscript~${}^{(1)}$ means the first space. Then, for each~$\mathsf f_i^{(1)}$, define a basis vector in each of the spaces 2 and~3 as follows:
\begin{equation*}
\mathsf f_i^{(2)}=\frac{ \mathsf f_i^{(1)}R_{12} }{a_{21}^p}, \qquad \mathsf f_i^{(3)}=\frac{ \mathsf f_i^{(1)}R_{13} }{a_{31}^p} ,
\end{equation*}
while for each~$\mathsf e_i^{(1)}$, define a basis vector in each of the spaces 2 and~3 as follows:
\begin{equation*}
\mathsf e_i^{(2)}=\frac{ \mathsf e_i^{(1)}R_{12} }{a_{12}^p}, \qquad \mathsf e_i^{(3)}=\frac{ \mathsf e_i^{(1)}R_{13} }{a_{13}^p} .
\end{equation*}
A matrix~$\mathsf r^{\mathrm T}$ corresponds then to each triple $\{ \mathsf f_i^{(1)}, \mathsf f_i^{(2)}, \mathsf f_i^{(3)} \}$, while a matrix~$\mathsf r$---to each triple $\{ \mathsf e_i^{(1)}, \mathsf e_i^{(2)}, \mathsf e_i^{(3)} \}$.
\end{proof}

\subsection{Proof of Theorem~\ref{th:p}}\label{ss:3dl}

\begin{lemma}\label{l:k}
 \begin{equation}\label{kvadrat}
 R_{12}R_{23}R_{31} + R_{13}R_{32}R_{21} = (a_{12}^p a_{23}^p a_{31}^p + a_{13}^p a_{32}^p a_{21}^p )\, \mathbbm 1_p .
 \end{equation}
\end{lemma}

\begin{proof}
First, we note that the l.h.s.\ of~\eqref{kvadrat} corresponds to the ``square diagram'' of Figure~\ref{f:k}
\begin{figure}
 \centering
 \includegraphics[scale=2.5]{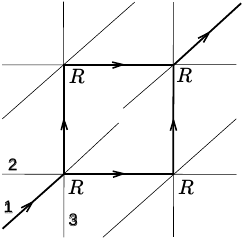}
 \caption{Square diagram. 1, 2, 3 are numbers of thick spaces}
 \label{f:k}
\end{figure}
with four matrices~$R$. The two summands in the l.h.s.\ of~\eqref{kvadrat} correspond to the two paths from the input (lower left oblique edge) to the output (upper right oblique edge).

Our~$R$'s act, however, in thick spaces, each consisting of $p$ ``thin'' spaces (where matrices~$A$ act). In terms of thin spaces, a matrix entry of the l.h.s.\ of~\eqref{kvadrat} corresponds to a path along the edges of a cubic lattice with $p\times 2p \times 2p$ vertices.

Consider the intersection of such path with a plane perpendicular to the first direction (hence, parallel to the plane of Figure~\ref{f:k}) and containing some of the vertices (there are $4p^2$ vertices in such plane of course). Such intersection may look, for instance, as in one of the pictures of Figure~\ref{f:ts}.
\begin{figure}
 \centering
 \includegraphics[scale=2]{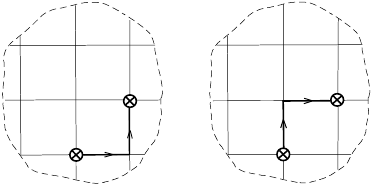}
 \caption{Two (of the many) possible ways within one plane perpendicular to direction~1. Small circles with oblique crosses stay where there are edges perpendicular to the plane of the picture and directed away from us}
 \label{f:ts}
\end{figure}

Any path of the considered kind intersects all $p$ planes perpendicular to direction~1. We now declare that two such paths belong to the same \emph{equivalence class} if their corresponding \emph{multisets}, consisting each of the $p$ intersections of the path with the $p$ planes, coincide. That is, there is a permutation of the planes that converts one sequence of $p$ intersections into the other; the intersections are considered to within parallel translations.

It is not hard to see that the matrix entries corresponding to all paths belonging to a given equivalence class are the same. One more easily established fact is that if there are at least two \emph{different} intersections in a multiset, then the cardinality of the corresponding equivalence class is a multiple of~$p$. Hence, the contribution of the whole equivalence class vanishes, given the characteristic~$p$ of the field where all matrix entries belong.

What remains is the paths whose intersections with all the planes are \emph{the same}. Additionally, their inputs and outputs must fit within the ``thick'' input and output of Figure~\ref{f:k}. With these requirements, the only two possibilities are: either all intersections are as in the l.h.s.\ of Figure~\ref{f:ts}, or as in its r.h.s.

The paths corresponding to the first possibility give together $a_{12}^p a_{23}^p a_{31}^p \mathbbm 1_p$, while those corresponding to the second possibility give  $a_{13}^p a_{32}^p a_{21}^p \mathbbm 1_p$.
\end{proof}

\begin{lemma}\label{l:e}
The product $R_{12}R_{23}R_{31}$ admits Jordan decomposition over the same field\/~$F$ where the entries~$a_{ij}$ of matrix~$A$ belong. Moreover, all its eigenvalues are either $a_{12}^p a_{23}^p a_{31}^p$ or $a_{13}^p a_{32}^p a_{21}^p$.
\end{lemma}

\begin{proof}
As the sum $R_{12}R_{23}R_{31} + R_{13}R_{32}R_{21}$ is, according to~\eqref{kvadrat}, a scalar matrix, both its summands can be brought into a Jordan form together, at least if we take a proper extension of the field\/$F$, and allow the \emph{nondiagonal} entries in the Jordan form to be~$-1$ rather than~$1$ for one of the summands.

Moreover, the \emph{product} of our two summands, due to item~$\mathrm{(b)}$ of Proposition~\ref{p:B3d}, is again a scalar matrix, namely,
\begin{equation}\label{RRRRRR}
R_{12}R_{23}R_{31} \cdot R_{13}R_{32}R_{21} = a_{12}^p a_{23}^p a_{31}^p \cdot a_{13}^p a_{32}^p a_{21}^p \cdot \mathbbm 1_p .
\end{equation}

If $\lambda$ is an eigenvalue of~$R_{12}R_{23}R_{31}$, and $\mu$---the eigenvalue of~$R_{13}R_{32}R_{21}$ for the same eigenvector, then it follows immediately from \eqref{kvadrat} and~\eqref{RRRRRR} that
\begin{equation}\label{l,m}
 \begin{aligned}
\text{either} & \quad\; \lambda = a_{12}^p a_{23}^p a_{31}^p \quad \text{and} \quad \mu = a_{13}^p a_{32}^p a_{21}^p, \\
\text{or} & \quad\; \lambda = a_{13}^p a_{32}^p a_{21}^p \quad \text{and} \quad \mu = a_{12}^p a_{23}^p a_{31}^p .
 \end{aligned}
\end{equation}
It follows also that no extension of the field\/~$F$ is actually needed.
\end{proof}

\begin{lemma}\label{l:d}
If condition~\eqref{ineq} holds, then matrices $R_{12}R_{23}R_{31}$ and $R_{13}R_{32}R_{21}$ are \emph{diagonalizable}---all Jordan cells are of dimension one.
\end{lemma}

\begin{proof}
Consider a supposed Jordan cell for $R_{12}R_{23}R_{31}$ of dimension more than one. If it has unities as its non-diagonal entries (as it is usually required in a definition of a Jordan cell), then $R_{13}R_{32}R_{21}$ has minus unities at the corresponding places, according to~\eqref{kvadrat}. Then, the \emph{product} of these cells has non-diagonal entries $-\lambda + \mu$. These entries must, however, vanish, because of~\eqref{RRRRRR}. Hence, $\lambda$ must be the same as~$\mu$, but \eqref{l,m} shows that it cannot be under the condition~\eqref{ineq}.
\end{proof}

\begin{lemma}\label{l:m}
For matrix $R_{12}R_{23}R_{31}$, algebraic multiplicities of the eigenvalues\/ $a_{12}^p a_{23}^p a_{31}^p$ and\/ $a_{13}^p a_{32}^p a_{21}^p$ are\/ $\dfrac{p(p+1)}{2}$ and\/ $\dfrac{p(p-1)}{2}$, respectively.
\end{lemma}

\begin{proof}
First, we can consider the entries $a_{11}, a_{12}, \ldots, a_{33}$ of matrix~$A$ as \emph{indeterminates} over the prime field~$\mathbb F_p$, taking our field~$F$ as $F = \mathbb F_p (a_{11}, a_{12}, \ldots, a_{33})$. According to Lemma~\ref{l:e}, the eigenvalues of $R_{12}R_{23}R_{31}$ are $a_{12}^p a_{23}^p a_{31}^p$ and $a_{13}^p a_{32}^p a_{21}^p$, with some algebraic multiplicities.

These multiplicities cannot change, however, if we specialize $a_{11}, a_{12}, \ldots, a_{33}$ to be \emph{any specific} elements of~$\mathbb F_p$ or its any extension. This means that, in order to find these multiplicities, we can take any simple particular case of~$A$, for instance, set $a_{12} = a_{23} = a_{31} = 1$, and $a_{ij} = 0$ for the remaining six entries~$a_{ij}$.

For such~$A$, there is no problem in finding eigenvectors of $R_{12}R_{23}R_{31}$: take one input and set its spin to one, while all the others to zero. In terms of our pictures, eigenvalue~$1$ will correspond to the spins on the SW--NE diagonal and below, while eigenvalue~$0$---to the spins above that diagonal, see Figure~\ref{f:muh}.
\begin{figure}
 \centering
 \includegraphics[scale=2]{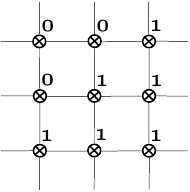}
 \caption{Eigenvalues of $R_{12}R_{23}R_{31}$ for $A=\begin{pmatrix}0 & 1 & 0\\ 0 & 0 & 1\\ 1 & 0 & 0\end{pmatrix}$ and $p=3$}
 \label{f:muh}
\end{figure}
\end{proof}

Now Theorem~\ref{th:p} has clearly been proven.

\section{Blocks in three dimensions: detailed calculations in characteristic two}\label{s:3dd}

\subsection{Diagonalizable case}\label{ss:dc}

In characteristic two, a calculation shows that there are the following four row eigenvectors for both operators $R_{12}R_{23}R_{31}$ and $R_{13}R_{32}R_{21}$ acting in the first thick space~$\mathbf V_1$ (see Subsection~\ref{ss:bb})---we denote them $\mathsf f^{(1)}$, $\mathsf e_1^{(1)}$, $\mathsf e_2^{(1)}$ and~$\mathsf e_3^{(1)}$, in accordance with the notations in the proof of Corollary~\ref{p:diag}, and write them together as the rows of a matrix:
\begin{gather}
\mbox{\small $ \displaystyle \begin{pmatrix} \mathsf f^{(1)} \\ \mathsf e_1^{(1)} \\ \mathsf e_2^{(1)} \\ \mathsf e_3^{(1)} \end{pmatrix} = $ } \nonumber \\[0.5ex]
 \label{thick_1}
\mbox{\footnotesize $ \displaystyle
\begin{pmatrix} a_{21}a_{31} & a_{31}(a_{21}a_{33}+a_{23}a_{31}) & 
  a_{21}(a_{21}a_{32}+a_{22}a_{31}) & 
  (a_{21}a_{33}+a_{23}a_{31})(a_{21}a_{32}+a_{22}a_{31}) \\
0 & 0 & a_{12} & a_{12}a_{33}+a_{13}a_{32} \\
0 & a_{13} & 0 & a_{12}a_{23}+a_{13}a_{22} \\
1 & a_{33} & a_{22} & a_{22}a_{33}+a_{23}a_{32} \end{pmatrix}
 $ }
\end{gather}
Here, $\mathsf f^{(1)}$ corresponds to eigenvalue $a_{13}^2 a_{32}^2 a_{21}^2$ of $R_{12}R_{23}R_{31}$ and eigenvalue $a_{12}^2 a_{23}^2 a_{31}^2$ of $R_{13}R_{32}R_{21}$, while each of $\mathsf e_1^{(1)}$, $\mathsf e_2^{(1)}$ and~$\mathsf e_3^{(1)}$ corresponds, vice versa, to eigenvalue $a_{12}^2 a_{23}^2 a_{31}^2$ of $R_{12}R_{23}R_{31}$ and eigenvalue $a_{13}^2 a_{32}^2 a_{21}^2$ of $R_{13}R_{32}R_{21}$.

In this subsection, we are considering the case where condition~\eqref{ineq} holds. We note that the determinant of matrix~\eqref{thick_1} is
\begin{equation}\label{db}
(a_{12}a_{23}a_{31} - a_{13}a_{32}a_{21})^2.
\end{equation}
hence \eqref{ineq} implies that the rows of~\eqref{thick_1} form a \emph{basis} in the first thick space.

Taking this basis, and choosing also bases in the two remaining thick spaces in accordance with Section~\ref{s:3d} (see again the proof of Corollary~\ref{p:diag}), namely:
\begin{gather}
\mbox{\small $ \displaystyle \begin{pmatrix} \mathsf f^{(2)} \\ \mathsf e_1^{(2)} \\ \mathsf e_2^{(2)} \\ \mathsf e_3^{(2)} \end{pmatrix} = $ } \nonumber \\[0.5ex]
 \label{thick_2}
\mbox{\footnotesize $ \displaystyle
\begin{pmatrix} a_{12} a_{32} & 
      a_{32} ( a_{12} a_{33}+a_{13} a_{32} ) & 
      a_{12} ( a_{11} a_{32}+a_{12} a_{31} ) & 
      ( a_{12} a_{33}+a_{13} a_{32} ) ( a_{11} a_{32}+a_{12} a_{31} ) \\
   1 & a_{33} & a_{11} & a_{11} a_{33}+a_{13} a_{31} \\ 
   0 & a_{23} & 0 & a_{11} a_{23}+a_{13} a_{21} \\ 
   0 & 0 & a_{21} & a_{21} a_{33}+a_{23} a_{31} \end{pmatrix}
 $ }
\end{gather}
in the second thick space, and
\begin{gather}
\mbox{\small $ \displaystyle \begin{pmatrix} \mathsf f^{(3)} \\ \mathsf e_1^{(3)} \\ \mathsf e_2^{(3)} \\ \mathsf e_3^{(3)} \end{pmatrix} = $ } \nonumber \\[0.5ex]
 \label{thick_3}
\mbox{\footnotesize $ \displaystyle
\begin{pmatrix} a_{13} a_{23} & a_{23} ( a_{12} a_{23}+a_{13} a_{22} ) & 
      a_{13} ( a_{11} a_{23}+a_{13} a_{21} ) & 
      ( a_{12} a_{23}+a_{13} a_{22} ) ( a_{11} a_{23}+a_{13} a_{21} ) \\ 
   0 & a_{32} & 0 & a_{11} a_{32}+a_{12} a_{31} \\ 
   1 & a_{22} & a_{11} & a_{11} a_{22}+a_{12} a_{21} \\ 
   0 & 0 & a_{31} & a_{21} a_{32}+ a_{22} a_{31} \end{pmatrix}
 $ }
\end{gather}
in the third thick space, we can check directly that $R$ acts as in~\eqref{RFrob}.

Note that the determinants of matrices \eqref{thick_2} and~\eqref{thick_3} are given by the same expression~\eqref{db} (which can be checked, for instance, by a direct calculation), so the rows of \eqref{thick_2} and~\eqref{thick_3} form bases indeed.

We can now specialize Corollary~\ref{p:diag} for the case of characteristic two as follows.
 
\begin{proposition}\label{p:d}
Let the entries of the ``brick'' matrix~$A$~\eqref{A} belong to a finite field~$F$ of characteristic $p=2$, and be such that \eqref{ineq} holds. Then $R$ can be represented---namely, in the basis given by vectors \eqref{thick_1}, \eqref{thick_2} and~\eqref{thick_3} in the respective thick spaces---as the following direct sum:
\begin{equation}\label{RFrob}     
\begin{pmatrix} a_{11}^2 & a_{21}^2 & a_{31}^2 \\ a_{12}^2 & a_{22}^2 & a_{32}^2 \\ a_{13}^2 & a_{23}^2 & a_{33}^2 \end{pmatrix}
\, \oplus \, \mathbbm 1_3 \otimes
\begin{pmatrix} a_{11}^2 & a_{12}^2 & a_{13}^2 \\ a_{21}^2 & a_{22}^2 & a_{23}^2 \\ a_{31}^2 & a_{32}^2 & a_{33}^2 \end{pmatrix}.
\end{equation}
\qed
\end{proposition}

Tensor product ``$\mathbbm 1_3 \otimes \text{matrix}$'' in~\eqref{RFrob} means of course the same as the direct sum of three such matrices.

\begin{corollary}\label{c:d}
The evolution---that is, iterative block making---of a brick~$A$~\eqref{A}, in characteristic~$p=2$ and with condition~\eqref{ineq}, leads, after $n$ steps, to a direct sum of $2^{2n-1}+2^{n-1}$ matrices~$\tilde A$ and $2^{2n-1}-2^{n-1}$ matrices~$\tilde A^{\mathrm T}$, where tilde over a matrix means that all its entries have been raised into the power~$2^n$.
\end{corollary}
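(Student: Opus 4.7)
My plan is to proceed by induction on~$n$, with Proposition~\ref{p:d} furnishing the base case $n=1$ directly ($a_1=2^{1}+2^{0}=3$ copies of $A$ with entries squared and $b_1=2^{1}-2^{0}=1$ copy of $A^{\mathrm T}$ with entries squared). For the inductive step I would assume that after $n$ iterations the block is gauge-equivalent to a direct sum of $a_n$ copies of $A^{(2^n)}$ and $b_n$ copies of $A^{\mathrm T(2^n)}$, where the bracketed exponent means that every entry is raised to that power; each summand is a $3\times 3$ brick acting on a ``channel''---a triple of one-dimen\-sional subspaces, one in each thick space.

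Two preliminary checks are needed before one more iteration can be carried out. First, Proposition~\ref{p:d} must be applicable to each channel-brick, which requires condition~\eqref{ineq}. For $A^{(2^n)}$ this reads $(a_{12}a_{23}a_{31})^{2^n}\neq(a_{13}a_{32}a_{21})^{2^n}$; injectivity of Frobenius in characteristic two reduces it to the original inequality~\eqref{ineq}. The transposed brick simply swaps the two sides of the inequality, so the condition survives there as well. Second---and this I expect to be the main obstacle---I need the lemma that a $2\times 2\times 2$ block built from a direct-sum brick $R=\bigoplus_\alpha R^{(\alpha)}$ itself decomposes as the direct sum of the $2\times 2\times 2$ blocks of the summands.

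The lemma admits a short separate argument. For $\alpha\neq\beta$, the restrictions of any two copies $R^{(\mathbf x)}, R^{(\mathbf y)}$ to different channels act on disjoint subspaces of the ambient space---even if $\mathbf x$ and $\mathbf y$ share a line, their $\alpha$- and $\beta$-components sit in different one-dimen\-sional summands of that line's thick space---so they commute. Hence in the product~\eqref{bl} the factors belonging to any fixed channel can be gathered together without disturbing their internal order, and the product factorizes as $\bigoplus_\alpha (R^{(\alpha)})^{\mathrm{block}}$, each $(R^{(\alpha)})^{\mathrm{block}}$ being a $2\times 2\times 2$ block of the single $3\times 3$ brick $R^{(\alpha)}$.

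Granted these, Proposition~\ref{p:d} applied channel-wise yields the recurrence
\begin{equation*}
a_{n+1}=3a_n+b_n,\qquad b_{n+1}=a_n+3b_n,
\end{equation*}
since every $A^{(2^n)}$-channel contributes three copies of $A^{(2^{n+1})}$ and one of $A^{\mathrm T(2^{n+1})}$, while every $A^{\mathrm T(2^n)}$-channel contributes the reverse. Diagonalizing $\bigl(\begin{smallmatrix} 3 & 1\\ 1 & 3\end{smallmatrix}\bigr)$, whose eigenvalues $4$ and $2$ have eigenvectors $(1,1)$ and $(1,-1)$, together with $a_1=3$, $b_1=1$, gives $a_n+b_n=4^n$ and $a_n-b_n=2^n$, so $a_n=2^{2n-1}+2^{n-1}$ and $b_n=2^{2n-1}-2^{n-1}$, as claimed.
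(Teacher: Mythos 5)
Your proof is correct and follows essentially the same route as the paper: apply Proposition~\ref{p:d} channel-wise and track the counts of $\tilde A$- and $\tilde A^{\mathrm T}$-type bricks via the recurrence matrix $Q=\bigl(\begin{smallmatrix}3&1\\1&3\end{smallmatrix}\bigr)$, then compute $\begin{pmatrix}1&0\end{pmatrix}Q^n$. The paper states this more tersely; your explicit verification that condition~\eqref{ineq} persists under Frobenius and the lemma that a block built from a direct-sum brick factors into the direct sum of blocks are precisely the points the paper leaves implicit.
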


\begin{proof}
Suppose we have, at some stage, the direct sum of $n_i$ bricks~$\tilde A$ (like the \emph{initial} matrix~$A$) and $n_t$ bricks~$\tilde A^{\mathrm T}$ (like its \emph{transpose}). If we write these numbers together as a row vector, then Proposition~\ref{p:d} can be re-written as the statement that the following transformation happens with this vector after each step:
\begin{equation*}
\begin{pmatrix} n_i & n_t \end{pmatrix} \mapsto \begin{pmatrix} n_i & n_t \end{pmatrix} Q, \qquad \text{where} \quad Q = \begin{pmatrix} 3 & 1 \\ 1 & 3 \end{pmatrix}.
\end{equation*}
One can see then that, indeed,
\begin{equation*}
\begin{pmatrix} 1 & 0 \end{pmatrix} Q^n = \begin{pmatrix} 2^{2n-1}+2^{n-1} & 2^{2n-1}-2^{n-1} \end{pmatrix}.
\end{equation*}
\end{proof}

\subsection{Bricks with elements in a commutative algebra}\label{ss:a}

As soon as we have explicit formulas \eqref{thick_1}, \eqref{thick_2} and~\eqref{thick_3} for basis vectors in the thick spaces on which the action of block~$R$~\eqref{R} is given either by~\eqref{RFrob} or its transpose, we can generalize our construction at once as follows.

Let each ``thin'' space be now~$V_i=F^n$; recall that we write its elements as $n$-rows $\begin{pmatrix} x_1 & \dots & x_n \end{pmatrix}$ of elements of field~$F\ni x_i$. Let, then, the entries~$a_{ij}$ of~$A$~\eqref{A} belong to a \emph{commutative subalgebra~$\mathcal A$ of the $n\times n$ matrix algebra} over~$F$. Denote
\begin{equation}\label{d}
d\, \stackrel{\mathrm{def}}{=} \,\det ( a_{12}a_{23}a_{31} - a_{13}a_{32}a_{21} ).
\end{equation}

\begin{proposition}\label{p:4n4n}
Each of the determinants of matrices \eqref{thick_1}, \eqref{thick_2} and~\eqref{thick_3}, regarded as $4n\times 4n$ matrices with entries in~$F$, equals\/~$d^2$.
\end{proposition}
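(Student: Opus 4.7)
The plan is to view each of the matrices \eqref{thick_1}, \eqref{thick_2}, \eqref{thick_3} not as a $4n\times 4n$ matrix over $F$, but as a $4\times 4$ matrix whose entries are elements of the commutative algebra $\mathcal A$ (i.e.\ $n\times n$ matrices that pairwise commute). For a $k\times k$ matrix $M=(M_{ij})$ with entries in a commutative subring of the $n\times n$ matrix algebra, there is a classical block-determinant identity (Silvester's theorem): the ordinary determinant of $M$ viewed as a $kn\times kn$ matrix over $F$ equals the determinant (over $F$) of the $n\times n$ matrix
\begin{equation*}
\Delta(M) \;=\; \sum_{\sigma\in S_k}\mathrm{sgn}(\sigma)\,\prod_{i=1}^k M_{i,\sigma(i)} \;\in\;\mathcal A,
\end{equation*}
where the product makes sense because the blocks commute. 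Thus $\det_F M = \det_F \Delta(M)$.

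Next, I would compute $\Delta$ of the matrix in \eqref{thick_1}. The key observation is that the formula \eqref{db}, obtained in the proof of Proposition~\ref{p:d}, is a purely polynomial identity in the nine symbols $a_{ij}$; it was derived by formal row/column manipulations that only use commutativity of the entries. Consequently the same identity holds verbatim when $a_{ij}\in\mathcal A$:
\begin{equation*}
\Delta\bigl(\text{matrix \eqref{thick_1}}\bigr) \;=\; \bigl(a_{12}a_{23}a_{31} - a_{13}a_{32}a_{21}\bigr)^2 \;\in\; \mathcal A.
\end{equation*}
Combining the two steps and using multiplicativity of $\det_F$ on $\mathcal A$,
\begin{equation*}
\det_F\bigl(\text{matrix \eqref{thick_1}}\bigr)
\;=\; \det_F\!\Bigl(\bigl(a_{12}a_{23}a_{31}-a_{13}a_{32}a_{21}\bigr)^2\Bigr)
\;=\; \bigl(\det_F(a_{12}a_{23}a_{31}-a_{13}a_{32}a_{21})\bigr)^2 \;=\; d^2.
\end{equation*}

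For \eqref{thick_2} and \eqref{thick_3} I would just repeat the argument: the parenthetical remark at the end of the proof of Proposition~\ref{p:d} already asserts that the scalar block-determinants of all three matrices are given by the same expression $(a_{12}a_{23}a_{31}-a_{13}a_{32}a_{21})^2$, and again this is a polynomial identity that lifts to $\mathcal A$. So Silvester's theorem delivers $d^2$ in all three cases.

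The main obstacle is really the first paragraph: one has to justify invoking Silvester's identity in the right generality. Because $\mathcal A$ is assumed commutative, this is standard, but it is worth stating explicitly (or giving a one-line reminder that commutativity of the blocks is exactly what makes the symbolic determinant $\Delta(M)$ well-defined and makes $\det_F M = \det_F \Delta(M)$ hold). Once this machinery is in place the rest of the proof is essentially immediate, because no new computation is required beyond the polynomial identity \eqref{db} already established for scalar $a_{ij}$.
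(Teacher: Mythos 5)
Your proof is correct and follows essentially the same route as the paper: the paper's one-line proof also rests on observing that the symbolic $4\times 4$ determinant over the commutative algebra $\mathcal A$ is still given by the polynomial identity \eqref{db}, and then implicitly passing to the $F$-determinant of the $4n\times 4n$ matrix. You make explicit the block-determinant identity (Silvester's theorem for commuting blocks) and the multiplicativity step $\det_F(u^2)=(\det_F u)^2$ that the paper leaves tacit, which is a clarification rather than a different approach.
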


\begin{proof}
This follows from the fact that the determinant of any of the matrices \eqref{thick_1}, \eqref{thick_2} and~\eqref{thick_3} is again given by the old expression~\eqref{db}, if we consider these matrices as $4\times 4$ matrices with entries in~$\mathcal A$.
\end{proof}

\begin{proposition}\label{p:b4n}
Assuming the following generalization of condition~\eqref{ineq}:
\begin{equation}\label{dB}
d \ne 0,
\end{equation}
bases in the three $4n$-dimen\-sional thick spaces can be chosen as follows: take the standard basis
\begin{equation}\label{rb}
\begin{pmatrix} 1 & 0 & \dots & 0 \end{pmatrix}, \quad\; \begin{pmatrix} 0 & 1 & \dots & 0 \end{pmatrix}, \quad\; \dots\, , \quad\; \begin{pmatrix} 0 & 0 & \dots & 1 \end{pmatrix},
\end{equation}
in\/~$F^n$, and multiply each of these rows from the right by the first \emph{block} row of the relevant matrix \eqref{thick_1}, \eqref{thick_2} or~\eqref{thick_3}, then similarly by the second, third and fourth row.

Then, the action of~$R$ is again given by~\eqref{RFrob}, with the understanding that there are now block matrices there.
\end{proposition}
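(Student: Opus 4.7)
The plan is to derive Proposition~\ref{p:b4n} from Proposition~\ref{p:d} by a formal specialization argument: all identities used to establish Proposition~\ref{p:d} are polynomial in the entries $a_{ij}$ and employ only the commutative ring structure, so they persist when the $a_{ij}$ are reinterpreted as elements of the commutative algebra $\mathcal A \subset M_n(F)$.

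First, I would verify that the $4n$ row vectors described in the statement actually form a basis of each thick space. Assembling them into a $4n\times 4n$ matrix over $F$ amounts to viewing each of the $4\times 4$ matrices \eqref{thick_1}, \eqref{thick_2}, \eqref{thick_3} as a $4\times 4$ matrix with entries in $\mathcal A$ and then expanding every $\mathcal A$-entry into its $n\times n$ block over $F$. By Proposition~\ref{p:4n4n}, the determinant of the resulting $4n\times 4n$ matrix equals $d^2 \in F$, which is nonzero by hypothesis~\eqref{dB}. Hence the $4n$ rows are linearly independent and form a basis of the corresponding thick space $\mathbf V_i \cong F^{4n}$.

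Second, I would check that $R$ acts in these three bases as the block version of \eqref{RFrob}. The proof of Proposition~\ref{p:d} establishes, by direct computation, a collection of identities describing how each $R_{kl}$ sends the basis vectors $\mathsf f^{(k)}$ or $\mathsf e_i^{(k)}$ to scalar multiples of the corresponding basis vector in the $l$-th thick space, with scalars given precisely by the entries of the matrices in~\eqref{RFrob}. Each such identity is a statement about formal polynomials in the symbols $a_{ij}$, and its verification uses nothing beyond addition, multiplication, commutativity, and distributivity. Consequently, every identity descends to any commutative ring containing the $a_{ij}$; in particular, all of them continue to hold when the $a_{ij}$ are specialized to elements of $\mathcal A$. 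Read as identities between $n\times 4n$ block rows over $F$ --- with $a_{ij}^2 \in \mathcal A$ now acting as an $n\times n$ matrix by left multiplication on the relevant block --- they describe exactly the matrix of $R$ in the new bases, which is the block-matrix version of \eqref{RFrob}.

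The main subtlety to watch for is that the construction of the bases in Proposition~\ref{p:diag} involves dividing by elements such as $a_{21}^p$ and $a_{12}^p$, which may fail to be invertible in $\mathcal A$. However, the divisions enter only the \emph{derivation} of the bases, not the bases themselves: the rows of \eqref{thick_1}--\eqref{thick_3}, as displayed, are polynomials in the $a_{ij}$, and the identities expressing the action of $R$ in these displayed bases are likewise polynomial. So the specialization from $F$ to $\mathcal A$ proceeds without obstruction, and the whole proof reduces to the observation that the commutative polynomial identities underlying Proposition~\ref{p:d} are preserved under substitution of commuting matrices for the $a_{ij}$.
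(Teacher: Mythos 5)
Your proof is correct and follows essentially the same route as the paper: use Proposition~\ref{p:4n4n} together with $d\neq 0$ to get bases, then observe that the explicit formulas \eqref{thick_1}--\eqref{thick_3} and the verification that $R$ acts as in~\eqref{RFrob} use only the commutative ring operations, so they carry over verbatim to $\mathcal A$. Your remark that the divisions by $a_{ij}^p$ occur only in the heuristic derivation of the bases (Proposition~\ref{p:diag}), not in the displayed polynomial formulas or the identities~\eqref{eef}, is a worthwhile clarification of a point the paper leaves implicit.
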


\begin{proof}
Condition~\eqref{dB} guarantees that we indeed obtain bases in the thick spaces. Then, taking into account that rows~\eqref{rb} form together an identity matrix, we see that all the calculations of Subsection~\ref{ss:dc} are again applicable, with the only understanding that $a_{ij}$ are now elements of a commutative algebra rather than just of field~$F$.
\end{proof}

\subsection{Non-diagonalizable case}\label{ss:ndc}

We now consider the case where the inequality~\eqref{ineq} does \emph{not} hold. To be exact, we limit ourself to the case
\begin{equation}\label{symm}
a_{12}a_{23}a_{31} = a_{13}a_{32}a_{21} \ne 0.
\end{equation}

\begin{proposition}\label{p:s}
Matrix~$A$~\eqref{A} with condition~\eqref{symm} can be made \emph{symmetric} by a gauge transform (see~\eqref{gt}):
\begin{equation*}
A\mapsto G^{-1}AG, \qquad G = \begin{pmatrix} g_1 & 0 & 0 \\ 0 & g_2 & 0 \\ 0 & 0 & g_3 \end{pmatrix}
\end{equation*}
\end{proposition}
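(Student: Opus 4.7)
The plan is to exploit the explicit form of a diagonal gauge transformation. Since each $V_i$ is one-dimensional, the block-diagonal matrix~$G$ in~\eqref{gt} reduces to $G = \mathrm{diag}(g_1, g_2, g_3)$ with nonzero scalars $g_i \in F^*$, and the conjugate $G^{-1}AG$ has $(i,j)$-entry equal to $g_i^{-1} a_{ij} g_j$. The diagonal entries $a_{ii}$ are unchanged by such a conjugation, so demanding that $G^{-1}AG$ be symmetric reduces to the off-diagonal conditions $g_i^{-1} a_{ij} g_j = g_j^{-1} a_{ji} g_i$ for $i\ne j$, which rearrange to $g_i^2/g_j^2 = a_{ij}/a_{ji}$. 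By~\eqref{symm}, both products $a_{12}a_{23}a_{31}$ and $a_{13}a_{32}a_{21}$ are nonzero, so each of the six off-diagonal entries of~$A$ is nonzero and these ratios are well defined in~$F^*$.

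Next I would verify that the three equations arising from the pairs $(1,2)$, $(2,3)$, $(1,3)$ are compatible. Multiplying the first two yields $g_1^2/g_3^2 = (a_{12}/a_{21})(a_{23}/a_{32})$, which agrees with the third equation $g_1^2/g_3^2 = a_{13}/a_{31}$ precisely when $a_{12}a_{23}a_{31} = a_{13}a_{32}a_{21}$---that is, precisely under hypothesis~\eqref{symm}. Thus~\eqref{symm} is exactly the compatibility condition of the linear system for $\log g_i^2$, and no other obstruction appears.

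It then remains to actually solve the system. Setting $g_3 = 1$ and taking $g_2^2 = a_{23}/a_{32}$, $g_1^2 = a_{13}/a_{31}$, the problem reduces to extracting two square roots in~$F$. In characteristic~$2$, the Frobenius map $x \mapsto x^2$ is an injective endomorphism of~$F$; since~$F$ is finite under the standing assumption of the paper, it is automatically surjective, so the required $g_1, g_2 \in F^*$ exist and are unique. One then reads off the desired symmetric form directly from $g_i^{-1} a_{ij} g_j$.

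The main---and really the only---obstacle is this last extraction of square roots; everything else is formal. On a finite field of characteristic~$2$ it is free, as just noted. In the more general setting of Subsection~\ref{ss:a}, where the $a_{ij}$ lie in a commutative $F$-algebra~$\mathcal A$, one would need to assume separately that $a_{13}/a_{31}$ and $a_{23}/a_{32}$ admit square roots in~$\mathcal A^*$, since this is not automatic in that generality.
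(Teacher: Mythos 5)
Your argument is correct and matches the paper's own proof in substance: both rely on the fact that a diagonal gauge transformation scales $a_{ij}$ by $g_i^{-1}g_j$, reduce symmetry to the three ratio conditions $g_i^2/g_j^2 = a_{ij}/a_{ji}$, observe that~\eqref{symm} is exactly the cyclic compatibility condition, and then extract square roots using the bijectivity of Frobenius on a finite field of characteristic~2. The paper simply normalizes $g_1 = 1$ rather than your $g_3 = 1$ and writes down the resulting $g_2 = \sqrt{a_{21}/a_{12}}$, $g_3 = \sqrt{a_{31}/a_{13}}$ without spelling out the compatibility check; otherwise the reasoning is identical.
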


\begin{proof}
Enough to take $g_1=1$, \ $g_2=\sqrt{a_{21}/a_{12}}\,$, \ $g_3=\sqrt{a_{31}/a_{13}}\,$, keeping in mind that there is always a square root of an element of a finite field of characteristic~2.
\end{proof}

In view of Proposition~\ref{p:s}, we assume below that $A$ is \emph{already} symmetric:
\begin{equation}\label{cs}
A = \begin{pmatrix} a_{11} & a_{12} & a_{13} \\ a_{12} & a_{22} & a_{23} \\ a_{13} & a_{23} & a_{33} \end{pmatrix}.
\end{equation}

The four rows in any of \eqref{thick_1}, \eqref{thick_2} or~\eqref{thick_3} become then linearly dependent. So, we introduce the following vectors in our three thick spaces:
\begin{align*}
& \mathsf g^{(1)} = \begin{pmatrix} 0 & 0 & 0 & a_{12}a_{13}a_{23}^2 \end{pmatrix}, \\
& \mathsf g^{(2)} = \frac{\mathsf g^{(1)} R_{12}}{a_{12}^2} = \begin{pmatrix} 0 & a_{13}a_{23}^2 & 0 & a_{11}a_{13}a_{23}^2 \end{pmatrix}, \\
& \mathsf g^{(3)} = \frac{\mathsf g^{(1)} R_{13}}{a_{13}^2} = \begin{pmatrix} 0 & a_{12}a_{23}^2 & 0 & a_{11}a_{12}a_{23}^2 \end{pmatrix},
\end{align*}
and choose the following bases in these spaces:
\begin{equation*}
\begin{pmatrix} \mathsf e_1^{(1)} \\ \mathsf e_2^{(1)} \\ \mathsf g^{(1)} \\ \mathsf f^{(1)} \end{pmatrix}, \qquad
\begin{pmatrix} \mathsf e_1^{(2)} \\ \mathsf e_2^{(2)} \\ \mathsf g^{(2)} \\ \mathsf f^{(2)} \end{pmatrix} 
 \quad \text{and} \quad
\begin{pmatrix} \mathsf e_1^{(3)} \\ \mathsf e_2^{(3)} \\ \mathsf g^{(3)} \\ \mathsf f^{(3)} \end{pmatrix}.
\end{equation*}

\begin{proposition}\label{p:t}
$R$ decomposes into the direct sum of two matrices~\eqref{cs}, with the change $a_{ij}\mapsto a_{ij}^2$ of each entry, and one $6\times 6$ matrix
\begin{equation}\label{T}
\begin{pmatrix} a_{11}^2\cdot \mathbbm 1_2 & \; a_{12}^2\cdot \mathbbm 1_2 \; & a_{13}^2\cdot \mathbbm 1_2 \\
           a_{12}^2\cdot \mathbbm 1_2 & \; a_{22}^2\cdot \mathbbm 1_2 \; & a_{23}^2\cdot T \\
           a_{13}^2\cdot \mathbbm 1_2 & \; a_{23}^2\cdot T \; & a_{33}^2\cdot \mathbbm 1_2 \\ \end{pmatrix},
\end{equation}
where
\begin{equation}\label{te}
\mathbbm 1_2 = \begin{pmatrix} 1 & 0 \\ 0 & 1 \end{pmatrix}, \qquad T = \begin{pmatrix} 1 & 1 \\ 0 & 1 \end{pmatrix}.
\end{equation}
\end{proposition}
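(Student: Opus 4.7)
The strategy is to mimic the proof of Proposition~\ref{p:diag}: construct explicit bases in each of the three thick spaces in which $R$ becomes a direct sum. The key new phenomenon under~\eqref{symm} is that the two eigenvalues $a_{12}^2a_{23}^2a_{31}^2$ and $a_{13}^2a_{32}^2a_{21}^2$ of $R_{12}R_{23}R_{31}$ coincide, so that operator is generally not diagonalizable; one must therefore expect a residual Jordan block somewhere.

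First I would use Proposition~\ref{p:s} to assume $A$ is already symmetric (form~\eqref{cs}) and inspect what goes wrong with the four vectors $\mathsf f^{(1)}, \mathsf e_1^{(1)}, \mathsf e_2^{(1)}, \mathsf e_3^{(1)}$ of Subsection~\ref{ss:dc}. Setting $a_{ij}=a_{ji}$ in~\eqref{thick_1} one sees that the four rows become linearly dependent, consistently with the vanishing of the determinant~\eqref{db}, but that $\mathsf e_1^{(1)}$ and $\mathsf e_2^{(1)}$ still form a linearly independent pair. I would then isolate the two triples $\{\mathsf e_j^{(1)},\mathsf e_j^{(2)},\mathsf e_j^{(3)}\}$, $j=1,2$: every calculation of the proof of Proposition~\ref{p:diag} applies to them verbatim and shows that each triple contributes one summand equal to the matrix obtained from~\eqref{cs} by $a_{ij}\mapsto a_{ij}^2$, accounting for the two ``diagonalizable'' summands claimed in the statement.

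Next I would complete each thick space to a basis by adjoining $\mathsf g^{(i)}$ and $\mathsf f^{(i)}$. To justify this I would check that the $4\times 4$ matrix with rows $\mathsf e_1^{(i)},\mathsf e_2^{(i)},\mathsf g^{(i)},\mathsf f^{(i)}$ has nonzero determinant, which is immediate from the explicit formulas and the nonvanishing of $a_{12}a_{13}a_{23}$ implicit in~\eqref{symm}. It then remains to compute the action of each $R_{ij}$ on the six new basis vectors. The diagonal blocks $R_{ii}$ are scalar by Proposition~\ref{p:B3}, yielding the $a_{ii}^2\mathbbm 1_2$ entries of~\eqref{T}. For $R_{12}$ and $R_{13}$ the defining relations $\mathsf g^{(2)}=\mathsf g^{(1)}R_{12}/a_{12}^2$ and $\mathsf f^{(2)}=\mathsf f^{(1)}R_{12}/a_{21}^2=\mathsf f^{(1)}R_{12}/a_{12}^2$ (the last equality by symmetry), together with the analogous relations for $R_{13}$, give the $a_{12}^2\mathbbm 1_2$ and $a_{13}^2\mathbbm 1_2$ entries of~\eqref{T}.

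The only non-trivial step, and the main obstacle, is to verify the $a_{23}^2\cdot T$ entries in positions $(2,3)$ and $(3,2)$: one must show by direct symbolic computation that $\mathsf g^{(2)}R_{23}=a_{23}^2(\mathsf g^{(3)}+\mathsf f^{(3)})$ while $\mathsf f^{(2)}R_{23}=a_{23}^2\mathsf f^{(3)}$, and likewise for $R_{32}$. This is precisely where the collapse of the two eigenvalues of $R_{12}R_{23}R_{31}$ manifests as a non-trivial Jordan block. The computation is not conceptually deep but is delicate: it relies on $a_{ij}=a_{ji}$ together with the Frobenius identity $(x+y)^2=x^2+y^2$ in characteristic two, and should be cross-checked by computing $R_{32}$ independently to confirm that both $(2,3)$ and $(3,2)$ blocks come out equal to $a_{23}^2T$ rather than $a_{23}^2T^{\mathrm T}$.
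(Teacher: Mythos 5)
Your proposal is correct and matches the paper's own proof: you build the same bases $\{\mathsf e_1^{(i)},\mathsf e_2^{(i)},\mathsf g^{(i)},\mathsf f^{(i)}\}$ in the thick spaces, observe that the $\mathsf e_1$ and $\mathsf e_2$ triples give the two $3\times 3$ summands via the relations already available from the diagonalizable case, and reduce the claim to the direct verification that $\mathsf g^{(i)}R_{ij}=a_{ij}^2(\mathsf g^{(j)}+\mathsf f^{(j)})$ precisely for $(i,j)\in\{(2,3),(3,2)\}$ and $=a_{ij}^2\mathsf g^{(j)}$ otherwise, while $\mathsf f^{(i)}R_{ij}=a_{ij}^2\mathsf f^{(j)}$ throughout — which is exactly relations~\eqref{eef} and~\eqref{g} in the paper. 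Your additional sanity check that both off-diagonal blocks $(2,3)$ and $(3,2)$ come out as $a_{23}^2T$ and not its transpose is a sensible caution but is resolved by the symmetry of~\eqref{g} in $i\leftrightarrow j$.
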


\begin{proof}
This follows from the fact that
\begin{equation}\label{eef}
\mathsf e_1^{(i)} R_{ij} = a_{ij}^2 \mathsf e_1^{(j)}, \quad\;
\mathsf e_2^{(i)} R_{ij} = a_{ij}^2 \mathsf e_2^{(j)}, \quad\;
\mathsf f^{(i)} R_{ij} = a_{ij}^2 \mathsf f^{(j)} \quad \text{for} \quad 1\le i,j \le 3,
\end{equation}
but for $\mathsf g^{(i)}$,
\begin{equation}\label{g}
\mathsf g^{(i)} R_{ij} = \begin{cases} a_{ij}^2 ( \mathsf g^{(j)} + \mathsf f^{(j)} ) & \text{if} \quad i=2,\;j=3 \quad \text{or} \quad j=2,\;i=3, \\ a_{ij}^2 \mathsf g^{(j)} & \text{for other} \quad 1\le i,j \le 3 . \end{cases}
\end{equation}
Relations~\eqref{eef} are already known from Subsection~\ref{ss:dc}, while \eqref{g} is checked directly.
\end{proof}

\begin{proposition}\label{p:t+}
The block made of double bricks
\begin{equation}\label{double}
\begin{pmatrix} a_{11}\cdot \mathbbm 1_2 & \; a_{12}\cdot \mathbbm 1_2 \; & a_{13}\cdot \mathbbm 1_2 \\
           a_{12}\cdot \mathbbm 1_2 & \; a_{22}\cdot \mathbbm 1_2 \; & a_{23}\cdot T \\
           a_{13}\cdot \mathbbm 1_2 & \; a_{23}\cdot T \; & a_{33}\cdot \mathbbm 1_2 \\ \end{pmatrix},
\end{equation}
where $\mathbbm 1_2$ and~$T$ are as in~\eqref{te}, decomposes into the direct sum of four simple bricks, obtained from~\eqref{cs} by the change $a_{ij}\mapsto a_{ij}^2$ of each entry, and two double bricks~\eqref{T}.
\end{proposition}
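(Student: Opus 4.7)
The plan is to recognize the double brick~\eqref{double} as a symmetric $3\times 3$ brick of the shape~\eqref{cs}, only with entries in the commutative subalgebra $\mathcal A\subset M_2(F)$ generated by $\mathbbm 1_2$ and~$T$. Since $T^2=\mathbbm 1_2$ in characteristic~$2$, this algebra is $\mathcal A\cong F[N]/(N^2)$ with $N=T-\mathbbm 1_2$, hence commutative and of dimension~$2$ over~$F$---exactly the setting of Subsection~\ref{ss:a}. Under this reinterpretation the $(2,3)$ and $(3,2)$ entries of the brick are $a_{23}(1+N)\in\mathcal A$, while all other entries lie in $F\subset\mathcal A$, and the block assembled from double bricks becomes an ordinary $2\times 2\times 2$ block of symmetric $\mathcal A$-bricks.

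Next, I would repeat the analysis of Subsection~\ref{ss:ndc} verbatim, with $F$ replaced by~$\mathcal A$. The analogue of~\eqref{symm} holds automatically because $\mathcal A$ is commutative: both iterated products equal $a_{12}a_{13}a_{23}(1+N)\ne 0$. The basis vectors $\mathsf e_i^{(j)},\mathsf f^{(j)},\mathsf g^{(j)}$ are defined by the same formulas, only with $a_{ij}\in\mathcal A$; the denominators $a_{12}^2,a_{13}^2$ needed in these formulas lie in $F^{\times}\subset\mathcal A^{\times}$, so the definitions are legitimate. The relations~\eqref{eef} and~\eqref{g}, being polynomial identities in the~$a_{ij}$, transfer to~$\mathcal A$ at once. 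This yields an $\mathcal A$-analogue of Proposition~\ref{p:t}: over~$\mathcal A$, the block decomposes into \emph{two} squared symmetric bricks of shape~\eqref{cs} (with entries $a_{ij}^{\mathcal A\,2}$) plus \emph{one} $\mathcal A$-matrix templated on~\eqref{T} (with coefficients $a_{ij}^{\mathcal A\,2}$).

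To unwind from~$\mathcal A$ back to~$F$, observe that the Frobenius of~$\mathcal A$ factors through~$F$: in characteristic~$2$ one has $(1+N)^2=1$, so $a_{23}^{\mathcal A\,2}=a_{23}^2\in F$, and trivially $a_{ij}^{\mathcal A\,2}=a_{ij}^2\in F$ for every other entry. Consequently all coefficients of both $\mathcal A$-summands lie in $F\subset\mathcal A$. Any $k\times k$ matrix~$M$ with entries in $F\subset\mathcal A$, regarded as an $F$-linear operator on the free module $\mathcal A^k\cong F^{2k}$, splits by an obvious basis re-ordering as~$M\oplus M$. Applied to our two $\mathcal A$-summands this gives $2\cdot 2=4$ simple bricks of shape~\eqref{cs} and $1\cdot 2=2$ double bricks of shape~\eqref{T}, matching the claim.

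The main obstacle is the $\mathcal A$-lift of Subsection~\ref{ss:ndc}: one must verify that the four rows of~\eqref{thick_1}--\eqref{thick_3} retain the same linear dependency over~$\mathcal A$ as over~$F$ (immediate from the vanishing of the determinant~\eqref{db}), that the corrected four-tuple involving $\mathsf g^{(i)}$ spans each thick space over~$\mathcal A$---equivalently, has unit determinant, which reduces modulo the maximal ideal $(N)$ to the nonvanishing $F$-case already treated---and that the distinctive relation~\eqref{g} producing the matrix~$T$ of~\eqref{te} reappears exactly. Each of these is a routine polynomial check over the commutative ring~$\mathcal A$.
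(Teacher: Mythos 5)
Your proof is correct and takes essentially the same approach as the paper: reinterpret the double brick as a symmetric brick over the commutative algebra generated by~$T$, apply the decomposition of Proposition~\ref{p:t} there, and exploit $T^2=\mathbbm 1_2$ so that all squared coefficients land back in~$F$, whence each $\mathcal A$-summand splits in two. You make the lift of Subsection~\ref{ss:ndc} to~$\mathcal A$ more explicit than the paper's terse two-line proof, but the underlying strategy is identical.
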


\begin{proof}
The same calculations can be applied as in the proof of Proposition~\ref{p:t}, as all the entries in~\eqref{double} belong to a commutative algebra. The r\^ole of~$a_{23}$ is now played by~$a_{23}\cdot T$, but as $T^2=\mathbbm 1_2$, the result is the same as just for two copies of the simple brick~\eqref{cs}.
\end{proof}

\begin{corollary}\label{c:symm}
The evolution---iterative block making---of a symmetric~$A$~\eqref{cs} leads, after $n$ steps, to $2^{2n-1}$ simple and $2^{2n-2}$ double bricks. These are as \eqref{cs} and~\eqref{double}, respectively, but with the change $a_{ij} \mapsto a_{ij}^{2^n}$, \ $1\le i,j\le 3$, in both cases.
\end{corollary}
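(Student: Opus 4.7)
The plan is to prove the corollary by induction on $n$, using Propositions \ref{p:t} and \ref{p:t+} as the single-step transition rules. The first thing to check is that the block-making construction respects direct sums: if at stage $n$ the matrix $R_n$ has been brought, in a suitable basis of each thick space, to the form $\bigoplus_k R_n^{(k)}$ where each $R_n^{(k)}$ is either a simple brick \eqref{cs} or a double brick \eqref{double} with entries $a_{ij}^{2^n}$, then each thick space of $R_n$ splits accordingly and is preserved by $R_n$. Since each copy of $R_n$ in the next $2\times 2\times 2$ block preserves this splitting, $R_{n+1}$ decomposes as an external direct sum of the $2\times 2\times 2$ blocks of the individual sub-bricks $R_n^{(k)}$.

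Second, I would apply Proposition \ref{p:t} to each simple sub-brick and Proposition \ref{p:t+} to each double sub-brick. For the double sub-brick the legitimacy of the step rests on the commutative-algebra generalization of Subsection \ref{ss:a}: the entries of \eqref{double} lie in the subalgebra of $2\times 2$ matrices generated by $\mathbbm 1_2$ and $T$, which is commutative because $T^2=\mathbbm 1_2$ in characteristic two, so Proposition \ref{p:b4n} makes the formal calculations of Subsection \ref{ss:dc} go through verbatim. Each such step sends every entry $a_{ij}^{2^n}$ to $a_{ij}^{2^{n+1}}$, matching the claimed exponent.

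Third, I would solve the linear recurrence that emerges. Writing $s_n$ and $d_n$ for the numbers of simple and double sub-bricks at stage $n$, the two propositions give
\[
s_{n+1} = 2 s_n + 4 d_n, \qquad d_{n+1} = s_n + 2 d_n.
\]
The base case $n=1$ is Proposition \ref{p:t} applied once to the symmetric brick \eqref{cs}, yielding $(s_1,d_1)=(2,1)$, which matches $(2^{2\cdot 1-1},2^{2\cdot 1-2})$. Assuming inductively $s_n=2^{2n-1}$ and $d_n=2^{2n-2}$, a one-line substitution gives $s_{n+1}=2^{2n}+2^{2n}=2^{2(n+1)-1}$ and $d_{n+1}=2^{2n-1}+2^{2n-1}=2^{2(n+1)-2}$, closing the induction.

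The only substantive point is the first---compatibility of block-making with the direct-sum decomposition, which is what justifies applying Propositions \ref{p:t} and \ref{p:t+} separately to each sub-brick at every stage. This becomes automatic once one observes that in the basis of Subsection \ref{ss:dc} the thick-space components belonging to different sub-bricks are genuine invariant direct summands, so the product of copies of $R_n$ that defines $R_{n+1}$ respects them.
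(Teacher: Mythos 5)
Your proposal is correct and takes essentially the same approach as the paper: it tracks the pair $(s_n,d_n)$ via the recurrence coming from Propositions~\ref{p:t} and~\ref{p:t+} and verifies the closed form, the only cosmetic difference being that you argue by explicit induction while the paper packages the same recurrence as a matrix power $\begin{pmatrix}1&0\end{pmatrix}Q^n$ with $Q=\begin{pmatrix}2&1\\4&2\end{pmatrix}$. You also make explicit the compatibility of block making with direct sums, which the paper leaves implicit but which is indeed the justification for applying the two propositions sub-brick by sub-brick.
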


\begin{proof}
Suppose we have, at some stage, $n_s$ simple bricks and $n_d$ double bricks. If we write these numbers together as a row vector, then Propositions \ref{p:t} and~\ref{p:t+} can be re-written together as the statement that the following transformation happens with this vector after each step:
\begin{equation*}
\begin{pmatrix} n_s & n_d \end{pmatrix} \mapsto \begin{pmatrix} n_s & n_d \end{pmatrix} Q, \qquad \text{where} \quad Q = \begin{pmatrix} 2 & 1 \\ 4 & 2 \end{pmatrix}.
\end{equation*}
One can see then that, indeed,
\begin{equation*}
\begin{pmatrix} 1 & 0 \end{pmatrix} Q^n = \begin{pmatrix} 2^{2n-1} & 2^{2n-2} \end{pmatrix}.
\end{equation*}
\end{proof}

\section{Four dimensions from three dimensions}\label{s:4d}

\subsection{Reducing four dimensions to three, with a special commutative algebra}\label{ss:43}

In four dimensions, we prefer to denote the $4\times 4$ matrix---analogue of ``brick''~$A$~\eqref{A}---as
\begin{equation}\label{B}
B=(b_{ij})_{1\le i,j\le 4}.
\end{equation}
Its entries will belong, in this subsection, to a finite field~$F$ of characteristic~$p$.
As for notation~$A$, it is reserved for a chain of bricks~$B$ situated along the 4th dimension, as shown in Figure~\ref{f:c},
\begin{figure}
 \centering
 \includegraphics[scale=1.1]{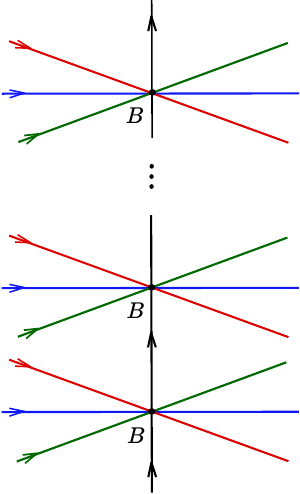}
 \caption{A chain of 4d bricks~$B$ making a 3d matrix~$A$. The 4th axis is vertical and black; three other colors correspond each to its own dimension}
 \label{f:c}
\end{figure}
where the fourth dimension is shown as vertical. Algebraically, we eliminate the spins on the fourth axis, obtaining a three-dimen\-sional~$A$ of the form~\eqref{A} but with entries in a commutative algebra.

The length (height) of the chain in Figure~\ref{f:c} will be denoted~$l$.

We consider two cases of boundary conditions:
\begin{itemize}\itemsep 0pt
 \item[$\mathrm{(a)}$] periodic boundary conditions along the 4th axis: the lowest input is the same as the uppermost output, as in item~\ref{i:p} of Subsection~\ref{ss:pc},
 \item[$\mathrm{(b)}$] each lowest input is zero, with no restrictions on the uppermost outputs, as in item~\ref{i:g} in Subsection~\ref{ss:pc}.
\end{itemize}
In case~$\mathrm{(a)}$, we will need to impose an additional condition on the fourth diagonal entry of~$B$, namely of not being an $l$-th root of unity:
\begin{equation}\label{Bc}
b_{44}^l \ne 1.
\end{equation}

\begin{proposition}\label{p:cg}
In both cases $\mathrm{(a)}$ and~$\mathrm{(b)}$, with condition~\eqref{Bc} in the case~$\mathrm{(a)}$, matrix entries~$a_{ij}$ of~$A$ belong to a commutative algebra~$\mathcal A$. It consists, in case~$\mathrm{(a)}$, of circulant matrices, while in case~$\mathrm{(b)}$---of upper triangular Toeplitz matrices.
\end{proposition}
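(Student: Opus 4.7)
The idea is to eliminate the vertical spins brick by brick via the brick equations, and then read off the structure of the coefficients of the resulting horizontal-to-horizontal action.

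Label the bricks along the 4th axis by $k = 1, \ldots, l$, and denote by $x_i^{(k)}$ and $y_i^{(k)}$ (for $i = 1, 2, 3$) the horizontal input and output spins at brick~$k$, and by $v^{(k)}$ the vertical spin on the edge leaving the top of brick~$k$ (so $v^{(0)}$ is the bottom input of the chain and $v^{(l)}$ is the top output). The brick equations split into
\begin{align*}
y_j^{(k)} &= \sum_{i=1}^{3} b_{ij}\, x_i^{(k)} + b_{4j}\, v^{(k-1)}, \\
v^{(k)} &= \sum_{i=1}^{3} b_{i4}\, x_i^{(k)} + b_{44}\, v^{(k-1)}.
\end{align*}
Writing $w_k = \sum_{i=1}^{3} b_{i4}\, x_i^{(k)}$, the second relation is a scalar first-order recursion whose general solution is $v^{(k)} = b_{44}^k\, v^{(0)} + \sum_{j=1}^{k} b_{44}^{k-j}\, w_j$.

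The next step is to pin down $v^{(0)}$ using the chosen boundary condition. In case~$\mathrm{(b)}$ this is simply $v^{(0)} = 0$. In case~$\mathrm{(a)}$, the periodic condition $v^{(l)} = v^{(0)}$ gives $(1 - b_{44}^l)\, v^{(0)} = \sum_{j=1}^{l} b_{44}^{l-j}\, w_j$, which can be inverted precisely under assumption~\eqref{Bc}. Substituting the resulting $F$-linear expression for $v^{(k-1)}$ back into the formula for $y_j^{(k)}$ exhibits each output as an explicit $F$-linear combination of the inputs $x_i^{(m)}$, $1 \le m \le l$.

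From this, the $l \times l$ block $a_{ij}$---whose $(m,k)$-entry is the coefficient of $x_i^{(m)}$ in $y_j^{(k)}$---takes the form $a_{ij} = b_{ij}\,\mathbbm 1_l + b_{i4}\, b_{4j} \cdot M$, where the auxiliary matrix $M$ depends on the boundary case but not on $i, j$. In case~$\mathrm{(a)}$ one finds $M_{m,k} = (1 - b_{44}^l)^{-1}\, b_{44}^{(k-1-m) \bmod l}$, depending only on $(k - m) \bmod l$, so $M$ is a circulant. In case~$\mathrm{(b)}$ one finds $M_{m,k} = b_{44}^{k-1-m}$ for $m \le k - 1$ and $0$ for $m \ge k$, so $M$ is upper triangular Toeplitz. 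Consequently all the $a_{ij}$ lie in the unital subalgebra of $M_l(F)$ generated by $M$, which in case~$\mathrm{(a)}$ is the algebra of circulants (isomorphic to $F[X]/(X^l - 1)$) and in case~$\mathrm{(b)}$ the algebra of upper triangular Toeplitz matrices (isomorphic to $F[X]/(X^l)$); both are manifestly commutative.

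The computations themselves are elementary; the main obstacle is bookkeeping. The indexing must be kept compatible with the right-action convention of Subsection~\ref{ss:conv}, so that the Toeplitz matrices in case~$\mathrm{(b)}$ come out \emph{upper} rather than lower triangular and the cyclic shift in case~$\mathrm{(a)}$ has the right orientation. Once the conventions are fixed, the identification of $\mathcal A$ with the stated algebra is immediate.
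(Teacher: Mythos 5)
Your proof is correct and follows essentially the same route as the paper: eliminate the vertical spins to express the $3l$ horizontal outputs as a linear function of the $3l$ horizontal inputs, then read off the common structure of the blocks $a_{ij}$. The only difference is one of packaging. The paper introduces the shift matrix $T$ (cyclic or nilpotent) that encodes the relation $(x_4)_{j+1}=(y_4)_j$, writes $B$ in block form, and arrives directly at the compact Schur-complement-like expression $A=\tilde K+\tilde L(\mathbbm 1_l-b_{44}T)^{-1}T\tilde M$, from which it concludes that all $a_{ij}$ lie in the commutative algebra $F[T]$. You instead unroll the brick-by-brick recursion for the vertical spins, impose the boundary condition to solve for $v^{(0)}$, and obtain the explicit matrix $M=(\mathbbm 1_l-b_{44}T)^{-1}T$ entrywise; your formulas $M_{m,k}=(1-b_{44}^l)^{-1}b_{44}^{(k-1-m)\bmod l}$ and $M_{m,k}=b_{44}^{k-1-m}$ for $m\le k-1$ agree with the paper's operator expression. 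One small imprecision: you assert that the $a_{ij}$ lie in the subalgebra generated by $M$ and that this subalgebra \emph{is} the algebra of circulants (resp.\ upper triangular Toeplitz matrices); for special values of $b_{44}$ the subalgebra generated by $M$ alone may be proper. What the Proposition needs — and what your computation actually shows — is only the weaker statement that each $a_{ij}=b_{ij}\mathbbm 1_l+b_{i4}b_{4j}M$ is itself circulant (resp.\ upper triangular Toeplitz), hence lies in that commutative algebra. With that cosmetic correction, the argument is complete.
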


\begin{proof}
Let $x_i$ denote the row of all the spins at the input arrows going along the $i$-th axis in Figure~\ref{f:c} (counting them from bottom to top), $i=1,2,3,4$, while $y_i$ will denote, similarly, the row of all the spins at the output arrows. We have
\begin{equation}\label{xy}
(y_4)_j = (x_4)_{j+1},\qquad j=1,\ldots,l-1,
\end{equation}
because the $j$-th outgoing arrow going along the 4th axis is the same as the $(j+1)$-th incoming arrow.

In matrix form, \eqref{xy} means
\begin{equation}\label{xym}
\begin{pmatrix} (x_4)_1 & \dots & (x_4)_l \end{pmatrix} = \begin{pmatrix} (y_4)_1 & \dots & (y_4)_l \end{pmatrix} T,
\end{equation}
where
\begin{equation}\label{Tr}
T = \begin{pmatrix} 0 & 1 & & \\ & \ddots & \ddots & \\ & & \ddots & \;1 \\ 1 & & & \;0 \end{pmatrix}\qquad \text{or}\qquad T = \begin{pmatrix} 0 & 1 & & \\ & \ddots & \ddots & \\ & & \ddots & \;1 \\ & & & \;0 \end{pmatrix}
\end{equation}
in the cases $\mathrm{(a)}$ and~$\mathrm{(b)}$, respectively (all entries are zero except those just above the main diagonal, and one more in the lower left corner in the case~$\mathrm{(a)}$).

We now represent $B$ as a block matrix as follows:
\begin{equation}\label{Bb}
 \begin{aligned}
& B=\begin{pmatrix} K & L \\ M & b_{44} \end{pmatrix}, \quad \text{where} \\
K=\begin{pmatrix} b_{11} & b_{12} & b_{13} \\ b_{21} & b_{22} & b_{23} \\ b_{31} & b_{32} & b_{33} \end{pmatrix}, & \qquad L=\begin{pmatrix} b_{14} \\ b_{24} \\ b_{34} \end{pmatrix},\qquad M=\begin{pmatrix} b_{41} & b_{42} & b_{43} \end{pmatrix}.
 \end{aligned}
\end{equation}
We introduce also the following notations:
\begin{align*}
& \mathsf x_i = \begin{pmatrix} (x_i)_1 & \ldots & (x_i)_l \end{pmatrix},\qquad \mathsf y_j = \begin{pmatrix} (y_i)_1 & \ldots & (y_i)_l \end{pmatrix},\qquad i=1,2,3, \\
& X = \begin{pmatrix} \mathsf x_1 & \mathsf x_2 & \mathsf x_3 \end{pmatrix},\qquad Y = \begin{pmatrix} \mathsf y_1 & \mathsf y_2 & \mathsf y_3 \end{pmatrix}, \\
& \tilde K = K\otimes \mathbbm 1_l,\qquad \tilde L = L\otimes \mathbbm 1_l,\qquad \tilde M = M\otimes \mathbbm 1_l.
\end{align*}
That is, $X$ includes all the input spins in the first three dimensions, $Y$ similarly includes output spins, and tilde means here that every entry~$b_{ij}$ in the corresponding matrix is replaced by the $l\times l$ scalar matrix $b_{ij}\cdot \mathbbm 1_l$.

A small linear-algebraic calculation shows then that
\begin{equation}\label{Afrom4}
Y=XA,\qquad \text{where \ } A =  \tilde K+\tilde L (\mathbbm 1_l - b_{44} T)^{-1} T\tilde M .
\end{equation}
Condition~\eqref{Bc} ensures that $(\mathbbm 1_l - b_{44} T)^{-1}$ exists in the case~$\mathrm{(a)}$.

We have obtained thus the desired three-dimen\-sional matrix~$A$, and its explicit form~\eqref{Afrom4} shows that its entries belong to a commutative subalgebra~$\mathcal A$ of the $l\times l$ matrix algebra, namely, $\mathcal A$ is generated by~$T$.
\end{proof}

\subsection{A simple example}\label{ss:mpl}

A simple example occurs when
\begin{equation}\label{b44=0}
b_{44} = 0.
\end{equation}
In case\/~$\mathrm{(a)}$, we get matrix\/~$A$ with entries\/~$a_{ij}$ being, in their turn, the following\/ $l\times l$ matrices:
\begin{equation}\label{a_ij_big}
a_{ij} = \begin{pmatrix} b_{ij} & b_{i4} b_{4j} & & \\ & \ddots & \ddots & \\ & & \ddots & b_{i4} b_{4j} \\ b_{i4} b_{4j} & & & b_{ij} \end{pmatrix}.
\end{equation}
That is, there are entries\/~$b_{ij}$, coming from the brick\/~$B$, all along the main diagonal, entries\/~$b_{i4} b_{4j}$ just above the main diagonal and in the lower left corner, and zeros everywhere else.

In case\/~$\mathrm{(b)}$, the only change is that there is a zero also in the lower left corner: $(a_{ij})_{l1}=0$.

\subsection{Cubic blocks in the simple example}\label{ss:mpl+}

We can now fix some~$l$ and start making blocks like we did in Sections \ref{s:3d} and~\ref{s:3dd}. We see this way that, in our construction, $l$ can be chosen independently of the other three sizes $p^n\times p^n\times p^n$, where $n$ is the number of block making iterations. For the simple example we are presenting, we will content ourself, however, with the case where all four dimensions are the same: $l=p^n$.

Moreover, we put below $p=2$, and assume that condition~\eqref{d},~\eqref{dB} is satisfied.

\begin{proposition}\label{p:tt}
In terms of values~$b_{ij}$, and assuming $l=2^m$, condition~\eqref{d},~\eqref{dB} is written, for matrix~$A$ with entries~\eqref{a_ij_big}, as
\begin{equation}\label{toepb}
( b_{12}+b_{14}b_{42} ) ( b_{23}+b_{24}b_{43} ) ( b_{31}+b_{34}b_{41} ) \ne
( b_{13}+b_{14}b_{43} ) ( b_{32}+b_{34}b_{42} ) ( b_{21}+b_{24}b_{41} ) 
\end{equation}
in the case~$\mathrm{(a)}$, and simply
\begin{equation}\label{trb}
b_{13}b_{21}b_{32} \ne b_{12}b_{23}b_{31}
\end{equation}
in the case~$\mathrm{(b)}$.
\end{proposition}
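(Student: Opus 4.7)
The plan is to compute the quantity $d=\det(a_{12}a_{23}a_{31}-a_{13}a_{32}a_{21})$ defined in \eqref{d} by exploiting the commutative algebra structure carried by the entries $a_{ij}$ of \eqref{a_ij_big}, and to translate the inequality $d\neq 0$ into a condition on the entries $b_{ij}$ of the original four-dimen\-sional brick.

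First I would describe the relevant algebra $\mathcal A$ in each case. In case~$\mathrm{(a)}$, $\mathcal A$ is generated by the cyclic shift $T$ of \eqref{Tr}; since $l=2^m$ and we are in characteristic two, $T^l-\mathbbm 1_l=(T-\mathbbm 1_l)^l=0$, so $\mathcal A\cong F[T]/((T+1)^l)$ is local, with residue homomorphism $\phi\colon\mathcal A\to F$ sending $T\mapsto 1$. In case~$\mathrm{(b)}$, $\mathcal A$ is the algebra of upper-triangular Toeplitz matrices, generated by the nilpotent $T$ of \eqref{Tr} with $T^l=0$, so $\mathcal A\cong F[T]/(T^l)$, with residue homomorphism $\phi$ sending $T\mapsto 0$. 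From \eqref{a_ij_big} (adapted in case~$\mathrm{(b)}$ by deleting the lower-left entry), each $a_{ij}$ is the element $b_{ij}\cdot\mathbbm 1_l+b_{i4}b_{4j}\cdot T\in\mathcal A$.

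Next I would use the key determinant identity: for any $f(T)\in\mathcal A$, viewed as an $l\times l$ matrix over $F$,
\begin{equation*}
\det f(T)=\phi(f(T))^l.
\end{equation*}
In case~$\mathrm{(b)}$ this is immediate because $f(T)$ is upper-triangular with the scalar $\phi(f(T))=f(0)$ on its main diagonal. In case~$\mathrm{(a)}$, the shift $T$ is similar to a single Jordan block at $1$ (its minimal polynomial $(X+1)^l$ has degree $l$, equal to the matrix size), so $f(T)$ is conjugate to an upper-triangular matrix with $\phi(f(T))=f(1)$ on its diagonal. Applying this identity with $f=a_{12}a_{23}a_{31}-a_{13}a_{32}a_{21}$ and using that $\phi$ is a ring homomorphism gives
\begin{equation*}
d=\bigl(\phi(a_{12})\phi(a_{23})\phi(a_{31})-\phi(a_{13})\phi(a_{32})\phi(a_{21})\bigr)^l.
\end{equation*}

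Finally I would substitute $\phi(a_{ij})=b_{ij}+b_{i4}b_{4j}$ in case~$\mathrm{(a)}$ to obtain the equivalence of $d\neq 0$ with \eqref{toepb}, and $\phi(a_{ij})=b_{ij}$ in case~$\mathrm{(b)}$ to obtain \eqref{trb}. The main step to pin down carefully is the eigenvalue structure of $T$ in case~$\mathrm{(a)}$, but since in characteristic two with $l=2^m$ all $l$-th roots of unity collapse to $1$, the required single-Jordan-block description of the cyclic shift is standard, and no genuine obstacle is anticipated.
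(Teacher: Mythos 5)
Your proof is correct and reaches the paper's conclusion by essentially the same computation, but it is slightly more self-contained than the paper's. Where the paper cites Silva's formula for the determinant of a circulant matrix of size~$p^n$ in characteristic~$p$ (determinant equals the $p^n$-th power of any row sum), you derive this directly: in characteristic two with $l=2^m$, the cyclic shift $T$ has minimal polynomial $X^l-1=(X-1)^l$ of full degree~$l$, so $T$ is similar to a single Jordan block at eigenvalue~$1$, and $\det f(T)=f(1)^l$ follows. You also unify the two cases neatly by exhibiting in each the residue homomorphism $\phi\colon\mathcal A\to F$ (sending $T\mapsto 1$ or $T\mapsto 0$) and observing that $\det$ on $\mathcal A$ is $\phi(\cdot)^l$; the paper treats the two cases separately, citing one fact for circulants and stating another for upper-triangular Toeplitz matrices. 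Both routes are sound; yours trades a literature citation for a short Jordan-form argument and makes the commutative-algebra mechanism that the paper leaves implicit fully explicit.
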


\begin{proof}
The determinant~$\det M$ of a circulant matrix~$M$ in any finite characteristic~$p$ was calculated in~\cite{Silva}. It can be seen from there that if the size of~$M$ is~$p^n$, then $\det M$ is the $p^n$-th power of the sum of entries in its any row. This, together with~\eqref{a_ij_big}, yields condition~\eqref{toepb}.

As for~\eqref{trb}, it follows of course from the simple fact that the determinant of a triangular Toeplitz matrix is the degree of its diagonal entry.
\end{proof}

\begin{proposition}\label{p:eB}
The (hyper)cube $2^n\times 2^n\times 2^n\times 2^n$ made of bricks \eqref{B},~\eqref{b44=0}, with boundary condition $\mathrm{(a)}$ or~$\mathrm{(b)}$ applied to the fourth dimension, stratifies into $2^n$ independent 3d layers. Each layer is gauge equivalent to the direct sum of\/ $2^{2n-1}+2^{n-1}$ matrices~$\tilde B$ and $2^{2n-1}-2^{n-1}$ matrices~$\tilde B^{\mathrm T}$, where
\begin{equation*}
\tilde B = \begin{cases} \bigl( ( b_{ij}+b_{i4}b_{4j} )^{2^n} \bigr)_{1\le i,j\le 3} & \text{in case } \mathrm{(a)}, \\
                         \bigl( b_{ij}^{2^n} \bigr)_{1\le i,j\le 3} & \text{in case } \mathrm{(b)}. \end{cases}
\end{equation*}
\end{proposition}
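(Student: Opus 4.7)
The plan is to chain the four-to-three dimen\-sional reduction of Proposition~\ref{p:cg} with the commutative-algebra version of the iterated three-dimen\-sional decomposition from Corollary~\ref{c:d}, and then to exploit the special form of the algebra~$\mathcal A$ to peel off the fourth-dimen\-sional factor as an identity matrix, which is what produces the layers.

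First, apply Proposition~\ref{p:cg} with $l=2^n$: this replaces the $2^n\times 2^n\times 2^n\times 2^n$ four-dimen\-sional cubic block by an equivalent three-dimen\-sional $2^n\times 2^n\times 2^n$ cubic block whose brick~$A$ is of the form~\eqref{A}, with entries $a_{ij}\in\mathcal A$ given explicitly by~\eqref{a_ij_big}. Here $\mathcal A$ is the algebra of $l\times l$ circulants in case~$\mathrm{(a)}$ and of upper-triangular Toeplitz matrices in case~$\mathrm{(b)}$. By Proposition~\ref{p:tt}, the working hypothesis---non-vanishing of the discriminant~$d$ from~\eqref{d}---is precisely the assumed inequality~\eqref{toepb} or~\eqref{trb}.

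Next, iterate block making $n$ times. Each iteration is governed by Proposition~\ref{p:b4n}, the commutative-algebra form of the diagonalizable-case decomposition. The same counting argument as in the proof of Corollary~\ref{c:d} applies verbatim, yielding a gauge-equivalent direct sum of $2^{2n-1}+2^{n-1}$ copies of $\tilde A$ and $2^{2n-1}-2^{n-1}$ copies of $\tilde A^{\mathrm T}$, where $\tilde A=(a_{ij}^{2^n})_{1\le i,j\le 3}$. Along the way, the non-degeneracy condition~\eqref{dB} persists: the Frobenius sends~$d$ to~$d^2$, and since $d\ne 0$ in the ground field~$F$, all later discriminants remain nonzero.

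The decisive computation is that of $a_{ij}^{2^n}$ inside~$\mathcal A$. Writing $a_{ij}=b_{ij}\cdot\mathbbm 1_l+b_{i4}b_{4j}\cdot T$ with $T$ the shift matrix of~\eqref{Tr}, the characteristic-two Frobenius iterated $n$ times gives
\begin{equation*}
a_{ij}^{2^n}=b_{ij}^{2^n}\cdot\mathbbm 1_l+(b_{i4}b_{4j})^{2^n}\cdot T^{2^n}.
\end{equation*}
Since $l=2^n$, in case~$\mathrm{(a)}$ the cyclic shift satisfies $T^{2^n}=T^l=\mathbbm 1_l$, giving $a_{ij}^{2^n}=(b_{ij}+b_{i4}b_{4j})^{2^n}\cdot\mathbbm 1_l$; in case~$\mathrm{(b)}$ the nilpotent shift satisfies $T^{2^n}=T^l=0$, giving $a_{ij}^{2^n}=b_{ij}^{2^n}\cdot\mathbbm 1_l$. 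In either case $\tilde A=\tilde B\otimes\mathbbm 1_l$ and $\tilde A^{\mathrm T}=\tilde B^{\mathrm T}\otimes\mathbbm 1_l$ with $\tilde B$ the $3\times 3$ matrix stated in the proposition; the identity tensor factor $\mathbbm 1_l$ exhibits the $l=2^n$ independent three-dimen\-sional layers, and inside each layer the numbers of $\tilde B$ and $\tilde B^{\mathrm T}$ bricks come out as claimed.

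No serious obstacle arises: the only non-obvious ingredient is the choice $l=2^n$, which makes $T^l$ collapse either to~$\mathbbm 1_l$ or to~$0$ and thereby causes the characteristic-two Frobenius to annihilate all off-diagonal commutative-algebra structure in~$\tilde A$. Verifying that this collapse produces exactly the listed~$\tilde B$ is then immediate.
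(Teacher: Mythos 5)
Your proof is correct and follows essentially the same route as the paper's: reduce the four-dimensional cube to a three-dimensional one with entries in the commutative algebra~$\mathcal A$ via Proposition~\ref{p:cg}, iterate the block decomposition using Proposition~\ref{p:b4n} and the counting of Corollary~\ref{c:d}, and observe that $a_{ij}^{2^n}$ collapses to a scalar matrix because the characteristic-two Frobenius gives $a_{ij}^{2^n}=b_{ij}^{2^n}\mathbbm 1_l+(b_{i4}b_{4j})^{2^n}T^{2^n}$ and the choice $l=2^n$ forces $T^l=\mathbbm 1_l$ (circulant case) or $T^l=0$ (nilpotent case). The paper's own proof is much terser, merely noting that $a_{ij}^{2^n}$ becomes scalar and citing Corollary~\ref{c:d}, so your write-up usefully makes explicit the Frobenius calculation and the persistence of the non-degeneracy condition~\eqref{dB} under iteration; but the underlying argument is identical.
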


\begin{proof}
First, we note that, after $n$ block making iterations, each~$a_{ij}$ turns into a scalar matrix, either $(b_{ij}+b_{i4}b_{4j})^{2^n}\cdot\mathbbm 1_{2^n}$ or $b_{ij}^{2^n}\cdot\mathbbm 1_{2^n}$, in our two respective cases. Then, we combine this with Corollary~\ref{c:d}.
\end{proof}

Note, by the way, that a similar cube made from more general bricks \eqref{B},~\eqref{Bc} stratifies into $2^n$ independent 3d layers as well, because, in both our commutative algebras, any element gives a scalar matrix when raised to the power~$2^n$.

\section{Self-similarity with Boltzmann weights}\label{s:w}

As we said in the beginning of Section~\ref{s:g}, our $\mathcal R$-operators can be considered as either $\mathbb C$-linear or $\mathbb R$-linear. In this section, where we speak of probabilities, we prefer to think of them as $\mathbb R$-linear.

\subsection{Probabilities for permitted spin configurations}\label{ss:(w)p}

For statistical physics, it is of course interesting to look what can happen with our self-similarity if we assign \emph{probabilities} to our permitted spin configurations. For a finite part~$\mathcal L$ of the cubic lattice and such a configuration~$K$ of its spins, the probability must be a real number
\begin{equation}\label{(w)p}
p(K) \in [0, 1] 
\end{equation}
obeying, at least, the following property: if there is a smaller part $\mathcal M\subset \mathcal L$ of the lattice, the probability of a given permitted configuration~$K_{\mathcal M}$ of spins belonging to~$\mathcal M$ is the sum
\begin{equation}\label{(w)M}
p(K_{\mathcal M}) = \sum_{\substack{K_{\mathcal L}\colon \\ K_{\mathcal L}|_{\mathcal M} =K_{\mathcal M}}} p(K_{\mathcal L})
\end{equation}
of probabilities of spin configurations~$K_{\mathcal L}$ on~$\mathcal L$ whose restriction on~$\mathcal M$ is~$K_{\mathcal M}$.

The sum over all permitted configurations must of course be one:
\begin{equation}\label{(w)e}
\sum_{\mathrm{all}\; K_{\mathcal L}} p(K_{\mathcal L}) = 1.
\end{equation}

\subsection{How such probabilities may occur}\label{ss:(w)l}

Matrix entries of our $\mathcal R$-operators are either zeros or unities, we noted it already in Subsection~\ref{ss:n}. The probabilities~$p(K)$ may occur if, for instance,
\begin{enumerate}\itemsep 0pt
 \item\label{i:(w)n} we consider a finite part~$\mathcal L$ of the cubic lattice,
 \item we replace the mentioned unities with non-negative real numbers---``local Boltzmann weights'',
 \item to each permitted spin configuration, we put in correspondence the product of the corresponding local Boltzmann weights,
 \item we calculate the \emph{state sum}---the sum of all mentioned products,
 \item\label{i:(w)k} the probability of a permitted spin configuration is, by definition, its corresponding product divided by the state sum.
\end{enumerate}
In this section we, however, do \emph{not} try to link probabilities~$p(K)$ to local Boltzmann weights, and content ourself with just a few examples of how probabilities~$p(K)$---whatever their origin---can behave with respect to the self-similarity of cubic blocks.

\subsection{The most symmetric probabilities}\label{ss:sym}

Consider a $2\times 2\times 2$ block~$\mathcal L$ of $\mathcal R$-operators, such as in Figure~\ref{f:3d}, with matrix~$A$ entries belonging to just the smallest field~$\mathbb F_2$ of two elements. It looks reasonable to study, at least for the beginning, the \emph{most distinguished}, in some reasonable sense, distributions of probabilities for its permitted spin configurations. Specifically, we propose to study the most \emph{symmetric} probabilities; to be more exact, we require that such a distribution obey the two following kinds of invariance that look natural in this context and that we are going to explain: \emph{restricted translation invariance}, and \emph{``hidden'' invariance} under the group~$\mathrm{GL}(3,\mathbb F_2)$.

\textbf{Restricted translation invariance }means, by definition, the same probabilities for each of the two ``transfer matrices'' corresponding to each of the three directions. A transfer matrix includes, by definition, \emph{one layer} of $\mathcal R$-operators, perpendicular to a given coordinate axis. For instance, a transfer matrix corresponding to the vertical direction is depicted in Figure~\ref{f:tm}.
\begin{figure}
 \centering
 \includegraphics[scale=0.8]{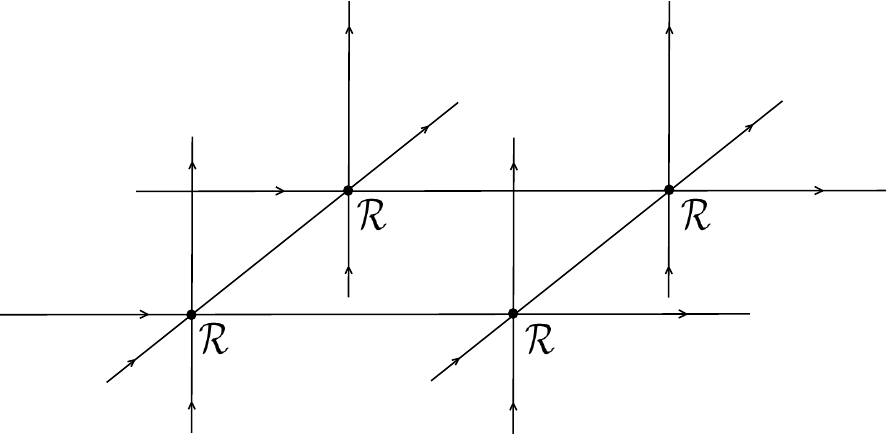}
 \caption{Transfer matrix corresponding to the vertical direction}
 \label{f:tm}
\end{figure}

Here and below, when we speak about ``same probabilities'' for two (or more) lattice fragments, we imply that these fragments can be superposed by a \emph{translation}, so that we can identify naturally their spin configurations.

We will also need one more kind of identification of spin configurations, appearing the following way. As $x^2=x$ for $x\in \mathbb F_2$, the 2nd, 3rd and 4th direct summands in~\eqref{RFrob} are the same in our case as the initial matrix~$A$~\eqref{A}. Hence, we can consider any of the mentioned summands separately as a new vertex, and identify, if needed, its spin configurations with those corresponding to any one vertex of the initial lattice.

Now we recall where the mentioned direct summands acted in Subsection~\ref{ss:dc}: the 2nd summand acted in the linear span of vectors $\mathsf e_1^{(1)}$, $\mathsf e_1^{(2)}$ and~$\mathsf e_1^{(3)}$; the 3rd summand---in the linear span of vectors $\mathsf e_2^{(1)}$, $\mathsf e_2^{(2)}$ and~$\mathsf e_2^{(3)}$; and the 4th, similarly,---in the linear span of vectors $\mathsf e_3^{(1)}$, $\mathsf e_3^{(2)}$ and~$\mathsf e_3^{(3)}$. Let $M\in \mathrm{GL}(3,\mathbb F_2)$---that is, let $M$ be a non-degenerate $3\times 3$ matrix with entries in~$\mathbb F_2$, and define new vectors $\tilde {\mathsf e}_i^{(j)}$, \ $i,j=1,2,3$, as follows:
\begin{equation}\label{wM}
\begin{pmatrix} \tilde {\mathsf e}_1^{(1)} & \tilde {\mathsf e}_1^{(2)} & \tilde {\mathsf e}_1^{(3)} \\ 
                \tilde {\mathsf e}_2^{(1)} & \tilde {\mathsf e}_2^{(2)} & \tilde {\mathsf e}_2^{(3)} \\ 
                \tilde {\mathsf e}_3^{(1)} & \tilde {\mathsf e}_3^{(2)} & \tilde {\mathsf e}_3^{(3)} \end{pmatrix}
= M \begin{pmatrix} \mathsf e_1^{(1)} & \mathsf e_1^{(2)} & \mathsf e_1^{(3)} \\ 
                    \mathsf e_2^{(1)} & \mathsf e_2^{(2)} & \mathsf e_2^{(3)} \\ 
                    \mathsf e_3^{(1)} & \mathsf e_3^{(2)} & \mathsf e_3^{(3)} \end{pmatrix}.
\end{equation}

\begin{proposition}\label{p:(w)e}
The block~$R$~\eqref{R} acts the same way in the space spanned by vectors $\tilde {\mathsf e}_i^{(1)}$, $\tilde {\mathsf e}_i^{(2)}$ and~$\tilde {\mathsf e}_i^{(3)}$, for any $i=1,2,3$, as it did for vectors without tildes, namely, like the initial~$A$ in our case of field\/~$\mathbb F_2$.
\end{proposition}

\begin{proof}
This follows immediately from the fact that (the restriction of) $R$ acts on the rows from the \emph{right}, and its action is the same for all the rows of the rightmost matrix in~\eqref{wM}, while matrix~$M$ acts from the \emph{left}.
\end{proof}

\textbf{``Hidden'' invariance }of probabilities holds, by definition, provided they do not change under the transformation $\bigl(\mathsf e_i^{(j)}\bigr) \to \bigl(\tilde{\mathsf e}_i^{(j)}\bigr)$ of spin configurations for block~$R$~\eqref{R} given by~\eqref{wM}, for any matrix $M\in \mathrm{GL}(3,\mathbb F_2)$.

There are
\begin{equation*}
|\mathbb F_2|^{\mathrm{number\; of\;input\;edges}}=2^{12}=4096
\end{equation*}
permitted spin configurations for Figure~\ref{f:3d}. Hence, if we put a real number in correspondence to each permitted spin configuration, such 4096-tuples form the linear space~$\mathbb R^{4096}$. We will take the liberty of referring to the elements of this~$\mathbb R^{4096}$ as `probabilities', in quotation marks. These `probabilities' differ from probabilities proper in that we forget, for the former, about requirements \eqref{(w)p} and~\eqref{(w)e}. Formula~\eqref{(w)M} remains valid, however.

Our two symmetries are, in this terminology, $\mathbb R$-linear restrictions on `probabilities' that single out a linear subspace $W\subset \mathbb R^{4096}$ regarded as consisting of the most symmetric `probabilities'.

\subsection[Factor spaces of~$W$]{Factor spaces of~$\bm{W}$}\label{ss:fW}

Suppose we have declared some permitted spin configurations for Figure~\ref{f:3d} \emph{equivalent}, and that the `probability' of such equivalence class~$\mathcal C$ is the sum
\begin{equation}\label{(w)C}
p(\mathcal C) = \sum _{K\in \mathcal C} p(K).
\end{equation}
We have already encountered one example of such situation in formula~\eqref{(w)M}.

All `probabilities'~\eqref{(w)C} arising this way form, as one can see, a \emph{factor} space of the initial space of `probabilities' given for all permitted spin configurations. We will take~$W$ for this initial space, and we will be interested in its following factor spaces:
\begin{itemize}\itemsep 0pt
 \item $W_0$, corresponding to $\mathcal M$ in~\eqref{(w)M} being just one vertex. It does not matter which exactly vertex we take, due to the restricted translation invariance,
 \item $W_1$, where spin configurations are declared equivalent provided they give the same input $\mathbb F_2$-vector for the \emph{first} direct summand in~\eqref{RFrob},
 \item $W_2$, $W_3$, $W_4$ are similar factor spaces for the 2nd, 3rd and 4th summands in~\eqref{RFrob}, respectively. These three spaces are, however, the same, due to the ``hidden'' invariance of~$W$, so we speak below just of~$W_2$.
\end{itemize}

Recall that the 2nd, 3rd and 4th summands in~\eqref{RFrob} are here the same as the initial matrix~$A$~\eqref{A}, while the first summand is~$A^{\mathrm T}$.

\subsection{Examples: dimensions of spaces}\label{ss:(w)d}

\begin{example}\label{e:a,at}
The following two matrices are each the transpose of the other:
\begin{equation}\label{a,at}
A = \begin{pmatrix} 1 & 1 & 0 \\ 0 & 0 & 1 \\ 1 & 0 & 0 \end{pmatrix} \qquad \text{or} \qquad
A = \begin{pmatrix} 1 & 0 & 1 \\ 1 & 0 & 0 \\ 0 & 1 & 0 \end{pmatrix}.
\end{equation}
The dimensions of spaces $W$, $W_0$, $W_1$ and~$W_2$ for each of these matrices turn out to be as follows:
\begin{equation*}
\dim W=13, \qquad \dim W_0=2, \qquad \dim W_1=\dim W_2=5.
\end{equation*}
\end{example}

\begin{example}\label{e:m3,m3t}
Next, we take
\begin{equation}\label{m3,m3t}
A = \begin{pmatrix} 1 & 1 & 1 \\ 0 & 1 & 1 \\ 1 & 1 & 0 \end{pmatrix} \qquad \text{or} \qquad
A = \begin{pmatrix} 1 & 0 & 1 \\ 1 & 1 & 1 \\ 1 & 1 & 0 \end{pmatrix},
\end{equation}
again being the transposes of one another. The dimensions are now
\begin{equation*}
\dim W=9, \qquad \dim W_0=2, \qquad \dim W_1=\dim W_2=3.
\end{equation*}
\end{example}

\begin{example}\label{e:tt,t}
The dimensions of~$W$ need \emph{not} coincide, however, for a matrix and its transpose. For
\begin{equation}\label{(w)tt}
A = \begin{pmatrix} 1 & 1 & 1 \\ 0 & 0 & 1 \\ 1 & 0 & 0 \end{pmatrix},
\end{equation}
the dimensions are
\begin{equation*}
\dim W=13, \qquad \dim W_0=2, \qquad \dim W_1=\dim W_2=5,
\end{equation*}
while if we take for~$A$ the transpose of~\eqref{(w)tt}, that is,
\begin{equation}\label{(w)t}
A = \begin{pmatrix} 1 & 0 & 1 \\ 1 & 0 & 0 \\ 1 & 1 & 0 \end{pmatrix},
\end{equation}
then the dimensions are
\begin{equation*}
\dim W=17, \qquad \dim W_0=2, \qquad \dim W_1=\dim W_2=5.
\end{equation*}
\end{example}

\begin{example}\label{e:dgn}
Finally, for any of the two \emph{degenerate} matrices
\begin{equation}\label{dgn}
A = \begin{pmatrix} 1 & 1 & 0 \\ 0 & 1 & 1 \\ 1 & 0 & 1 \end{pmatrix} \qquad \text{or} \qquad
A = \begin{pmatrix} 1 & 0 & 1 \\ 1 & 1 & 0 \\ 0 & 1 & 1 \end{pmatrix},
\end{equation}
the dimensions are
\begin{equation*}
\dim W=9, \qquad \dim W_0=2, \qquad \dim W_1=\dim W_2=3.
\end{equation*}
\end{example}

\subsection{The self-similarity}\label{ss:(w)y}

Each of the spaces $W_0$, $W_1$ and~$W_2$ corresponds to one vertex, with three input edges. Hence, it is a subspace of the space~$\mathbb R^8$ of all possible `probabilities' for one edge.

In each case of Subsection~\ref{ss:(w)d}, the space~$W_0$ turned out to be the same, namely, the linear span of the following vectors from~$\mathbb R^8$:
\begin{equation}\label{(w)h}
\begin{pmatrix} 1 & 0 & 0 & 0 & 0 & 0 & 0 & 0 \end{pmatrix} \qquad \text{and} \qquad \begin{pmatrix} 0 & 1 & 1 & 1 & 1 & 1 & 1 & 1 \end{pmatrix},
\end{equation}
where the first component corresponds to the input basis vector $\begin{pmatrix} 0 & 0 & 0 \end{pmatrix} \in \mathbb F_2^3$, while the rest of the components belong to all other input $\mathbb F_2$-vectors. The order of these other components clearly does not matter in~\eqref{(w)h}; actually, we use the lexicographic order in our calculations: $\begin{pmatrix} 0 & 0 & 0 \end{pmatrix} \in \mathbb F_2^3$ goes first, then $\begin{pmatrix} 0 & 0 & 1 \end{pmatrix} \in \mathbb F_2^3$, and so on.

It turns out also that both vectors~\eqref{(w)h} enter, in all our examples, in both $W_1$ and~$W_2$ as well. Of these, $W_2$ looks most interesting for self-similarity, because its corresponding direct summand in~\eqref{RFrob} is the same as the initial~$A$. We can write thus $W_0\subset W_2$, identifying the corresponding spin configurations as explained in Subsection~\ref{ss:sym}.

A vector $w\in W$ induces, according to~\eqref{(w)C}, some vectors $w_0\in W_0$ and~$w_2\in W_2$. These $w_0$ and~$w_2$, taken for all~$w$, are not independent: a remarkable fact is that there are \emph{two linear dependences} between them, for any example of Subsection~\ref{ss:(w)d}. Moreover, these linear dependences turn out to be such that
\begin{equation}\label{w0w2}
\text{if\, additionally}\quad w_2\in W_0, \quad \text{then}\quad w_2 = w_0.
\end{equation}

The requirement $w_2\in W_0$ is actually quite natural, because when we aggregate the eight vertices in Figure~\ref{f:3d} into one ``block vertex'', we probably want to take then a block of eight such ``block vertices'' and proceed with it the same way, and we probably expect our ``block probabilities'' to have again the symmetries of Subsection~\ref{ss:sym}. For this block of blocks, ``old'' $W_2$ acquires, however, the r\^ole of new~$W_0$.

In this sense, the self-similarity, shown experimentally for all examples of Subsection~\ref{ss:(w)d}, means that, for any $\alpha \in [0,1]$ and $\beta = (1-\alpha)/7$, if the probability (quotation marks can be omitted now) distribution for a vertex is given by $\begin{pmatrix} \alpha & \beta & \beta & \beta & \beta & \beta & \beta & \beta \end{pmatrix}$, then it remains \emph{the same for block vertices}.

\section{Discussion}\label{s:d}

We have considered an algebraic analogue of Kadanoff--Wilson theory, related to fields of finite characteristics and showing a surprising generality and a surprisingly simple kind of self-similarity when making spin blocks.

This way, we can calculate some statistical quantities related to cubic lattices in different dimensions. These quantities are---at the moment---integer-valued, namely the numbers of ``permitted spin configurations'', for given relations between spins around each vertex and some types of boundary conditions.

Further work in this direction may consist in calculating and exploring statistical quantities related to three-dimen\-sional cubic lattice but more complicated commutative algebras, perhaps even infinite-dimen\-sional.

There are indications that the theory will become even more interesting with adding a \emph{non-homogeneous} term to the dependence between the input and output spins of a vertex, like it was done in~\cite{Hietarinta}:
\begin{equation*}
\mathsf x_{\mathrm{out}} = \mathsf x_{\mathrm{in}} A + \mathsf b \quad\; \text{instead\, of\, just}\quad\; \mathsf x_{\mathrm{out}} = \mathsf x_{\mathrm{in}} A,
\end{equation*}
where $\mathsf x_{\mathrm{in}}$ and~$\mathsf x_{\mathrm{out}}$ are vectors of input and output spins taking values in a finite field, $A$ a matrix, and $\mathsf b$ a vector.

Finally, Section~\ref{s:w} gives the positive answer to the question whether our self-similarity has anything to do with non-trivial---real positive---Boltzmann weights. It would be interesting to investigate the properties of our `most symmetric probability distribution' for the infinite cubic lattice, understanding that the symmetries of Subsection~\ref{ss:sym} apply at every block making step. More generally, the algebraic structure of self-similarity with Boltzmann weights needs theoretical investigation.

\end{document}